\definecolor{winered}{rgb}{0.8,0,0}
\definecolor{deepblue}{rgb}{0,0,0.8}
\newtheorem{thm}{Theorem}[section]
\newtheorem{prop}[thm]{Proposition}
\newtheorem{lemma}[thm]{Lemma}
\newtheorem{lem}[thm]{Lemma}
\theoremstyle{definition}
\newtheorem{df}[thm]{Definition}
\newtheorem{rmk}[thm]{Remark}
\newtheorem{none}[thm]{}
\theoremstyle{remark}
\numberwithin{equation}{thm}
\newcommand{\bA}{\mathbf{A}}
\newcommand{\bG}{\mathbf{G}}
\newcommand{\bH}{\mathbf{H}}
\newcommand{\bZ}{\mathbf{Z}}
\newcommand{\cF}{\mathcal{F}}
\newcommand{\cG}{\mathcal{G}}
\newcommand{\rC}{\mathrm{C}}
\newcommand{\rD}{\mathrm{D}}
\newcommand{\et}{{\acute{e}t}}
\DeclareMathOperator{\Hom}{Hom}
\DeclareMathOperator{\uHom}{\underline{Hom}}
\DeclareMathOperator{\Spec}{Spec}
\newcommand{\id}{{\rm id}}
\newcommand{\DM}{{\mathrm{DM}}}
\newcommand{\eff}{{\mathrm{eff}}}
\newcommand{\DMeffk}{{\DM^\eff(k)}}
\newcommand{\DMeffket}{\mathrm{DM}_{\acute{e}t}^{\mathrm{eff}}(k)}
\newcommand{\Sh}{\mathrm{Sh}}
\newcommand{\Alb}{\mathrm{Alb}}
\newcommand{\LAlb}{\mathrm{LAlb}}
\newcommand{\RPic}{\mathrm{RPic}}
\newcommand{\NS}{\mathrm{NS}}
\newcommand{\Pic}{\mathrm{Pic}}
\begin{document}
\title{Motivic interpretation of Albanese varieties of smooth varieties}
\author{Doosung Park}
\date{\today}
\address{Institut f\"ur Mathematik, Universit\"at Z\"urich, Winterthurerstr. 190, 8057 Z\"urich, Switzerland}
\email{doosung,park@math.uzh.ch}
\subjclass[2010]{Primary: 14F42; Secondary: 14C15, 14K30}
\keywords{Albanese varieties, Motives, Picard functors}
\begin{abstract}
For every noetherian smooth and separated scheme over an algebraically closed field, we define its derived Albanese in Voevodsky's triangulated category of effective Nisnevich motives.
To justify our definition, we relate the derived Albanese with the Albanese scheme.
We also prove that the derived Albanese satisfies the Nisnevich descent property.
\end{abstract}
\maketitle
\section{Introduction}

In the book of Barbieri-Viale and Kahn \cite{MR3545132}, they defined the \emph{derived Albanese} of $X$ for every $X\in Sm/k$ when $k$ is a perfect field.
Here, $Sm/k$ denotes the category of noetherian smooth and separated schemes over $k$.
Their definition can be interpreted as
\[
\LAlb(X)
:=
\uHom_{\DMeffket}(a_{\et}\uHom_\DMeffk(M(X),\bZ(1)[2]),\bZ(1)[2]),
\]
where $\DMeffk$ (resp.\ $\DMeffket$) is Voevodsky's triangulated category of effective Nisnevich (resp.\ \'etale) motives \cite{MVW}, $a_{\et}\colon \DMeffk\to \DMeffket$ is the derived sheafification functor, $M(X)$ is the motive of $X$, and $\uHom_\DMeffk$ and $\uHom_{\DMeffket}$ are the internal homs in $\DMeffk$ and $\DMeffket$.
Unfortunately, $\bZ/p\simeq 0$ in $\DMeffket$ if $p$ is the exponential characteristic of $k$, i.e., $p$ is inverted in their definition.

What we do in this paper is to define $\LAlb(X)$ in $\DMeffk$ without inverting $p$ when $k$ is an algebraically closed field.
Our definition is as follows:
\[
\LAlb(X)
:=
\tau_{\geq 0} \uHom_\DMeffk(\uHom_\DMeffk(M(X),\bZ(1)[2]),\bZ(1)[2]),
\]
where $\tau_{\geq 0}$ is the truncation functor for the homotopy $t$-structure (Definition \ref{1.2}).

We describe our $\LAlb(X)$ as follows to justify our definition.

\begin{thm}[Theorem \ref{2.41}]
Suppose $X\in Sm/k$.
Then there exists a functorial distinguished triangle
\[
\NS^*(X)[1]
\to
\LAlb(X)
\to
\Alb(X)
\to
\NS^*(X)[2]
\]
in $\DMeffk$, where $\NS^*(X):=\underline{\rm Hom}_{\DMeffk}(\NS(X),\bZ(1)[1])$, and $\NS(X)$ denotes the N\'eron-Severi group of $X$.
\end{thm}
We note that the derived Albanese of Barbieri-Viale and Kahn also satisfies a similar result after inverting $p$, see \cite[Theorem 9.2.2]{MR3545132}.
We stick to the Nisnevich topology, and this makes our proof different from theirs.

We also show that $\LAlb$ satisfies the Nisnevich descent property for another justification, see Theorem \ref{2.44} for details.

\begin{none}
\label{0.5}
\emph{Outline of the proof.}
We define motives $M_{\geq 1}(X)$ and $M_1^*(X)$ fitting in a distinguished triangle
\[
M_1^*(X)\to \uHom_{\DMeffk}(M_{\geq 1}(X),\bZ(1)[2]) \to \NS(X)\to M_1^*(X)[1],
\]
which will play a central role throughout the proof.
When $X$ is proper, the structure of $\uHom_{\DMeffk}(M(X),\bZ(1)[2])$ is analyzed in Proposition \ref{1.25}.
When $X$ is not proper, we can find an $h$-hypercover $X_\bullet\to X$ such that $X_\bullet$ admits a suitable compactification by de Jong's alterations.
We need to relate the two motives
\[
\uHom_{\DMeffk}(M(X),\bZ(1)[2])
\text{ and }
\uHom_{\DMeffk}(M(X_\bullet),\bZ(1)[2]),
\]
which is done in Proposition \ref{1.6}.
This allows us to deduce that there is a distinguished triangle
\begin{equation}
\label{0.5.2}
A\to M_1^*(X)\to N[1]\to A[1]
\end{equation}
in $\DMeffk$ for some lattice $N$ and abelian variety $A$.
This is the main content of Section 5.

Our next goal is to show that there is an isomorphism
\begin{equation}
\label{0.5.1}
\tau_{\geq 0}(\uHom_\DMeffk(A^\vee,\bZ(1)[2])) \simeq A
\end{equation}
for every abelian variety $A$, where $A^\vee$ is the dual abelian variety of $A$.
Together with the results in Sections 4 and 6 about the computations of certain hom groups, we deduce \eqref{0.5.1} from what we have discovered in Section 5.
The functoriality of \eqref{0.5.1} ends Section 7.
In Section 8, we generalize \eqref{0.5.1} to semi-abelian varieties.
We also establish the functoriality.
As a consequence, we relate \eqref{0.5.2} with the Albanese variety of $X$, and we finish the proof.
\end{none}

\begin{none}\label{0.4}
{\it Notations and convention.}
\begin{enumerate}
\item[(1)] $k$ is an algebraically closed field.
\item[(2)] $Sch/k$ denotes the category of noetherian separated schemes over $k$.
\item[(3)] ${\rm Sh}^{tr}(k)$ denotes the category of Nisnevich sheaves with transfers on $Sm/k$.
\item[(4)] A \emph{lattice} is the constant sheaf in $\Sh^{tr}(k)$ associated with a finitely generated free abelian group.
\item[(5)] For any morphism $F\rightarrow G$ in an abelian category, let $[F\rightarrow G]$ denote the naturally associated complex, where $F$ sits in degree $0$.
  \end{enumerate}
\end{none}

\begin{none}
\emph{Acknowledgement.}
Part of this work is done while the author stayed at Centre for Advanced Study at the Norwegian Academy of Science and Letters.
We would like to thank this institution for providing a very comfortable working environment.
\end{none}
\section{Picard functors}
\label{pic}

In this section, we study the Picard functors of simplicial schemes in $Sm/k$.

\begin{df}
\label{1.12}
Suppose that $X_\bullet$ is a simplicial scheme in $Sm/k$.
The \emph{simplicial Picard group of $X_\bullet$} is defined to be the hypercohomology group
\[
\Pic(X_\bullet):=\bH_{\et}^1(X_\bullet,\bG_m).
\]
The \emph{simplicial Picard functor} $\Pic_{X_\bullet/k}$ is a presheaf of abelian groups on $Sm/k$ given by
\[
\Pic_{X_\bullet/k}(T):=\Pic(X_\bullet\times T)/{\rm Pic}(T)
\]
for $T\in Sm/k$.
It has a transfer structure since the functors $T\mapsto {\rm Pic}(X_\bullet\times T)$ and $T\mapsto {\rm Pic}(T)$ are presheaves with transfers on $Sm/k$, see \cite[Example 2.5]{MVW}.
When $X_\bullet$ is a constant simplicial scheme $X$, $\Pic_{X/k}$ is the restriction of the usual Picard functor to $Sm/k$.
\end{df}

\begin{none}
Suppose $X\in Sm/k$.
If $T$ is an integral scheme in $Sm/k$, consider the composite homomorphism
\[
{\rm Pic}(X\times T)\stackrel{i^*}\rightarrow {\rm Pic}(X)\rightarrow {\rm NS}(X),
\]
where $i\colon X\to X\times T$ is the pullback of a closed immersion $\{x\}\rightarrow T$ from a rational point $x$ of $X$.
The definition of $\NS(X)$ tells that the composite homomorphism is independent of the choice of $i$.
Moreover, any element of ${\rm Pic}(T)$ in ${\rm Pic}(X\times T)$ maps to $0$ in ${\rm NS}(X)$.
Hence we have an induced morphism
  \[{\rm Pic}_{X/k}\rightarrow {\rm NS}(X)\]
of presheaves with transfers, where we regard ${\rm NS}(X)$ as a constant sheaf with transfers.
We denote by ${\rm Pic}_{X/k}^0$ its kernel.
\end{none}
\begin{none}
\label{1.15}
For every $X\in Sm/k$, we set
\[
\pi_0(\bZ^{tr}(X)):=\bZ^{r}
\]
if $X$ has exactly $r$ connected components.
We denote by $\bZ_{\geq 1}^{tr}(X)$ the kernel of the induced morphism
\[
\bZ^{tr}(X)\rightarrow \pi_0(\bZ^{tr}(X)).
\]
Since $k$ is algebraically closed, every connected component of $X$ has a rational point.
Thus there is a noncanonical decomposition
\[
\bZ^{tr}(X)\simeq  \bZ_{\geq 1}^{tr}(X)\oplus \pi_0(\bZ^{tr}(X)).
\]
We set
\[
M_{\geq 1}(X):=C_*\bZ_{\geq 1}^{tr}(X)\text{ and }M_0(X):=\pi_0(\bZ^{tr}(X)),
\]
and we view them as objects of $\DMeffk$.
If $X_\bullet$ is a simplicial scheme in $Sm/k$, we define $\pi_0(\bZ^{tr}(X_\bullet))$ and $\bZ_{\geq 1}^{tr}(X_\bullet)$ similarly, and we set
\[
M_{\geq 1}(X_\bullet):=\mathrm{Tot}(C_*\bZ_{\geq 1}^{tr}(X_\bullet))\text{ and }
M_0(X_\bullet):= \pi_0(\bZ^{tr}(X_\bullet)).
\]
\end{none}

\begin{prop}
\label{1.13}
For every $X\in Sm/k$, ${\rm Pic}_{X/k}$ is a Nisnevich sheaf with transfers on $Sm/k$.
\end{prop}
\begin{proof}
Since we have checked that ${\rm Pic}_{X/k}$ is a presheaf with transfers in Definition \ref{1.12}, it remains to show that ${\rm Pic}_{X/k}$ is a Nisnevich sheaf.
Suppose $T\in Sm/k$.
If $T$ is a disjoint union of $T_1$ and $T_2$, then
\[
{\rm Pic}(X\times T)\simeq {\rm Pic}(X\times T_1)\oplus {\rm Pic}(X\times T_2)
\text{ and }
{\rm Pic}(T)\simeq{\rm Pic}(T_1)\oplus {\rm Pic}(T_2).\]
  Thus
  \begin{equation}\label{1.13.1}
  {\rm Pic}_{X/k}(T)\simeq {\rm Pic}_{X/k}(T_1)\oplus {\rm Pic}_{X/k}(T_2).
  \end{equation}
Suppose that
  \[\begin{tikzcd}
    U'\arrow[r,"g'"]\arrow[d,"f'"']&T'\arrow[d,"f"]\\
    U\arrow[r,"g"]&T
  \end{tikzcd}\]
is a Nisnevich distinguished square, i.e., $f$ is \'etale, $g$ is an open immersion, and the induced morphism $f^{-1}(T-g(U))\rightarrow T-g(U)$ is an isomorphism with the reduced scheme structure on $T-g(U)$.
To check that ${\rm Pic}_{X/k}$ is a Nisnevich sheaf, owing to \cite[Corollary 2.17]{Voe10a} it suffices to show that the induced sequence
\[
0\rightarrow {\rm Pic}_{X/k}(T)\stackrel{p''}\rightarrow {\rm Pic}_{X/k}(T')\oplus {\rm Pic}_{X/k}(U)\stackrel{q''}\rightarrow {\rm Pic}_{X/k}(U')\rightarrow 0
\]
is exact.
By \eqref{1.13.1}, we reduce to the case when $T$ is connected.
Then $T$ is integral since $T$ is smooth over $k$.

Let $d$ be the number of irreducible components of $T-g(U)$ whose dimensions are $({\rm dim}\,T-1)$.
By Bloch's localization sequence \cite{zbMATH00653321}, there are commutative diagrams with exact rows:
\[
\begin{tikzcd}
    \bZ^d\arrow[d,"{\rm id}"']\arrow[r,"u"]&{\rm Pic}(T)\arrow[d,"f^*"]\arrow[r,"g^*"]&{\rm Pic}(U)\arrow[d,"f'^*"]\arrow[r]&0\\
    \bZ^d\arrow[r]&{\rm Pic}(T')\arrow[r,"g'^*"]&{\rm Pic}(U')\arrow[r]&0,
\end{tikzcd}
\]
\[
\begin{tikzcd}
    \bZ^d\arrow[d,"{\rm id}"']\arrow[r,"v"]&{\rm Pic}(X\times T)\arrow[d]\arrow[r]&{\rm Pic}(X\times U)\arrow[d]\arrow[r]&0\\
    \bZ^d\arrow[r]&{\rm Pic}(X\times T')\arrow[r]&{\rm Pic}(X\times U')\arrow[r]&0.
  \end{tikzcd}
\]

There is an induced commutative diagram
  \begin{equation}\label{1.13.2}
    \begin{tikzcd}[column sep=small, row sep=small]
    {\rm Pic}(T)\arrow[r,"p"]\arrow[d,"r"']&{\rm Pic}(T')\oplus {\rm Pic}(U)\arrow[d,"r'"]\arrow[r,"q"]&{\rm Pic}(U')\arrow[d,"r''"]\arrow[r]\arrow[d]&0\\
    {\rm Pic}(X\times T)\arrow[r,"p'"]\arrow[d,"s"']&{\rm Pic}(X\times T')\oplus {\rm Pic}(X\times U)\arrow[r,"q'"]\arrow[d,"s'"]&{\rm Pic}(X\times U')\arrow[r]\arrow[d,"s''"]& 0\\
    {\rm Pic}_{X/k}(T)\arrow[r,"p''"]\arrow[d]& {\rm Pic}_{X/k}(T')\oplus {\rm Pic}_{X/k}(U)\arrow[r,"q''"]\arrow[d]& {\rm Pic}_{X/k}(U')\arrow[r]\arrow[d]& 0\\
    0&0&0,
  \end{tikzcd}
  \end{equation}
where $p=(f^*,-g^*)$ and $q$ is the summation of $f'^*$ and $g'^*$.
Note that all the columns are exact by definition.
Taking ${\rm Hom}_\DMeffk(-,\bZ(1)[2])$ to the distinguished triangle
\[
M(U')\rightarrow M(T')\oplus M(U)\rightarrow M(T)\rightarrow M(U')[1],
\]
we deduce that the top row in \eqref{1.13.2} is exact.
Similarly, the middle row in \eqref{1.13.2} is exact.

Let us show that the bottom row in \eqref{1.13.2} is exact. 
Consider an element
\[
b''\in {\rm Pic}_{X/k}(T')\oplus {\rm Pic}_{X/k}(U)
\]
such that $q''(b'')=0$.
Choose
\[
b'\in {\rm Pic}(X\times T')\oplus {\rm Pic}(X\times U)
\]
such that $s'(b')=b''$.
Then $s''(q'(b'))=q''(s'(b'))=0$, so $q'(b')=r''(c)$ for some $c\in {\rm Pic}(U')$.
Choose
\[
b\in {\rm Pic}(T')\oplus {\rm Pic}(U)
\]
such that $q(b)=c$.
Then $q'(b'-r'(b))=q'(b')-r''(c)=0$, so $b'-r'(b)=p'(a')$ for some
\[
a'\in {\rm Pic}(X\times T).
\]
It follows that
\[
b''=s'(b')=s'(b'-r'(b))=s'(p'(a))=p''(s(a')),
\]
so $b''$ is in the image of $p''$.
Since $q'$ and $s''$ are surjective, $q''$ is also surjective. We have shown that the bottom row in \eqref{1.13.2} is exact.

It remains to show that $p''$ is injective.
Let $a''\in {\rm Pic}_{X/k}(T)$ be an element such that $p''(a'')=0$.
Choose
\[
a'\in {\rm Pic}(X\times T)
\]
such that $s(a')=a''$.
Then $s'(p'(a'))=p''(a'')=0$, so $p'(a')=r'(b)$ for some
\[
b\in {\rm Pic}(T')\oplus {\rm Pic}(U).
\]
Thus $r''(q(b))=q'(r'(b))=q'(p'(a'))=0$.
Since $k$ is algebraically closed, the projection $X\times U'\rightarrow U'$ has a section. Thus $r''$ is injective, and hence $q(b)=0$ since $r''(q(b))=0$. 
It follows that $b=p(a)$ for some
\[
a\in {\rm Pic}(T).
\]
Now $p'(a'-r(a))=p'(a')-r'(b)=0$.
It follows that $v(t)=a'-r(a)$ for some $t\in \bZ^d$.
Then $a'-r(a)=r(u(t))$, so $a'$ is in the image of $r$.
Thus $a''=0$. This proves that $p''$ is injective.
\end{proof}

\begin{df}\label{3.26}
  Recall that an object $F$ of a triangulated category $\mathcal{T}$ is {\it compact} if the functor ${\rm Hom}_\mathcal{T}(F,-)$ commutes with small sums.
\end{df}
\begin{df}\label{2.14}
  Let $\mathcal{T}$ be a triangulated category having small sums, and let $\mathcal{F}$ be an essentially small class of compact objects in $\mathcal{T}$. Recall from \cite[Proposition 2.1.70]{Ayo07} that there is a $t$-structure such that the category of $t$-positive objects is the smallest full subcategory of $T$ containing $\mathcal{F}$ and stable under small sums, suspensions, and extensions. This $t$-structure is called the \emph{$t$-structure on $\mathcal{T}$ generated by $\mathcal{F}$}.

  For $i\in \bZ$, we denote by $h_i$ the homology functor, and we denote by $\tau_{\leq i}$ and $\tau_{\geq i}$ the homological truncation functors.
  
  According to the definition and properties of $t$-structures, we have the following.
  \begin{enumerate}
    \item[(i)] For every $M\in \mathcal{T}$, $\tau_{\geq 0}M$ is $t$-positive, and $\tau_{\leq 0}M$ is $t$-negative.
    \item[(ii)] For every $t$-positive object $M$ and $t$-negative object $N$, we have the vanishing
\[
{\rm Hom}_\mathcal{T}(M,N[-1])=0.
\]
    \item[(iii)] For every $M\in \mathcal{T}$ and integer $i$, there is a canonical distinguished triangle
    \[\tau_{\geq i}M\rightarrow M\rightarrow \tau_{\leq i-1}M\rightarrow \tau_{\geq i}M[1].\]
    \item[(iv)] For every $M\in \mathcal{T}$ and integer $i$,
    \[h_i(M):=(\tau_{\geq i}\tau_{\leq i}M)[-i]\simeq (\tau_{\leq i}\tau_{\geq i}M)[-i],\]
    and this is in the heart.
  \end{enumerate}
\end{df}
\begin{prop}\label{1.34}
For every $X\in Sm/k$, $M(X)$ and $M_{\geq 1}(X)$ are compact in $\DMeffk$.
\end{prop}
\begin{proof}
By \cite[Example 5.1.29(2), Proposition 5.1.32]{CD19}, we see that $M(X)$ is compact in $\DMeffk$.
In particular, $\bZ\simeq M(k)$ is compact in $\DMeffk$.
From the distinguished triangle
\[
M_{\geq 1}(X)\rightarrow M(X)\rightarrow \bZ^{\pi_0(X)}\rightarrow M_{\geq 1}(X)[1],
\]
we deduce that $M_{\geq 1}(X)$ is compact in $\DMeffk$ too.
\end{proof}
\begin{df}\label{1.2}
The {\it $0$-motivic $t$-structure} (or \emph{homotopy $t$-structure}) on $\DMeffk$ is the $t$-structure generated by objects of the form
\[M(X)\]
for all $X\in Sm/k$.
Due to \cite[Proposition 3.3]{MR2735752}, this definition is equivalent to the definition in \cite[Definition 3.1]{MR2735752}.
Note that the heart of the $0$-motivic $t$-structure is equivalent to the category of homotopy invariant Nisnevich sheaves with transfers on $Sm/k$ by the following paragraph of \cite[Definition 3.1]{MR2735752}.
\end{df}

\begin{prop}
\label{1.20}
For every $X\in Sm/k$, there is an isomorphism
\[
h_0(\underline{\rm Hom}_\DMeffk(M(X),\bZ(1)[2]))\simeq{\rm Pic}_{X/k}.
\]
\end{prop}
\begin{proof}
For every $T\in Sm/k$, there is an isomorphism
\[
\Hom_\DMeffk(M(T),\underline{\rm Hom}_\DMeffk(M(X),\bZ(1)[2]))
\simeq
\Pic(T\times X).
\]
Thus $h_0(\underline{\rm Hom}_\DMeffk(M(X),\bZ(1)[2]))$ is the Nisnevich sheaf with transfers associated with the presheaf with transfers
\[
T\mapsto {\rm Pic}(T\times X).
\]
By Proposition \ref{1.13}, ${\rm Pic}_{X/k}$ is a Nisnevich sheaf.
There is a morphism
\[
p\colon h_0(\underline{\rm Hom}_\DMeffk(M(X),\bZ(1)[2]))\rightarrow {\rm Pic}_{X/k}
\]
given by taking the quotient
\[
q\colon {\rm Pic}(T\times X)\rightarrow {\rm Pic}(T\times X)/{\rm Pic}(T).
\]
Let us show $p$ is an isomorphism. It suffices to check that $q$ is an isomorphism when $T$ is a henselian local scheme.
In this case, we are done since ${\rm Pic}(T)=0$.
\end{proof}

\begin{prop}
\label{1.26}
For every proper simplicial scheme $X_\bullet$ in $Sm/k$, there is an isomorphism
\[
h_0(\underline{\rm Hom}_{\DMeffk}(M(X_\bullet),\bZ(1)[2]))
\simeq
\Pic_{X_{\bullet}/k}.
\]
\end{prop}
\begin{proof}
We can argue as in the proof of Proposition \ref{1.20}.
Just observe that $\Pic_{X_\bullet/k}$ is a Nisnevich sheaf since it is representable, see \cite[Lemma 4.1.2]{MR1891270}.
\end{proof}

\begin{prop}
\label{1.25}
For every proper $X\in Sm/k$, there is an isomorphism
\[
\underline{\rm Hom}_\DMeffk(M_{\geq 1}(X),\bZ(1)[2])\simeq {\rm Pic}_{X/k}.
\]
\end{prop}
\begin{proof}
We only need to consider the case when $X$ is integral.
By Proposition \ref{1.20}, for every $i\in \bZ-\{0\}$ it suffices to show that the morphism
  \[h_i(\underline{\rm Hom}_\DMeffk(M(k),\bZ(1)[2]))\rightarrow h_i(\underline{\rm Hom}_\DMeffk(M(X),\bZ(1)[2]))\]
induced by the structure morphism $X\rightarrow k$ is an isomorphism.
These Nisnevich sheaves are associated with the presheaves
  \[T\mapsto H_{Nis}^{1-i}(X\times T,\bG_m)\text{ and } T\mapsto H_{Nis}^{1-i}(T,\bG_m).\]
According to \cite[Vanishing Theorem 19.3]{MVW}, these are $0$ if $i\neq 0,1$.
Thus we only need to show that the induced homomorphism
\begin{equation}\label{1.25.1}
H^0(T,\bG_m)\rightarrow H^0(X\times T,\bG_m)
\end{equation}
is an isomorphism for every $T\in Sm/k$.
We also only need to consider the case when $T$ is integral.

Let us argue as in \cite[Lemma 2.12]{Man}.
Since $k$ is algebraically closed, the projection $p\colon X\times T \rightarrow T$ has a section $i\colon T\rightarrow X\times T$.
Hence it suffices to show that \eqref{1.25.1} is surjective.
By \cite[Th\'eor\`eme III.7.7.6]{EGA}, the induced homomorphism
\[
p^*\colon H^0(T,\mathcal{O}_T)\rightarrow H^0(X\times T,\mathcal{O}_{X\times T})
\]
is an isomorphism.
Let $a\in H^0(X\times T,\bG_m)$ be an element. Then $a=p^*b$ for some $b\in H^0(T,\mathcal{O}_T)$. Since $i^*a=i^*p^*b=b$, $b$ is in $H^0(T,\bG_m)$. This shows that \eqref{1.25.1} is surjective.
\end{proof}

\begin{prop}
\label{1.39}
Suppose that $X_\bullet$ is a proper simplicial scheme in $Sm/k$.
If $L$ is the kernel of
\[
\bZ^{\pi_0(X_0)}\rightarrow \bZ^{\pi_0(X_1)}
\]
that is the dual of the induced morphism $\bZ^{\pi_0(X_1)}\rightarrow \bZ^{\pi_0(X_0)}$, then
\[
h_i(\uHom_{\DMeffk}(M(X_\bullet),\bZ(1)[2]))
\simeq
\left\{
\begin{array}{ll}
\Pic_{X_{\bullet}/k} & \text{if }i=0,
\\
L\otimes \bG_m & \text{if }i=1,
\\
0 & \text{if }i\geq 2.
\end{array}
\right.
\]
\end{prop}
\begin{proof}
By Proposition \ref{1.26}, we only need to work for the case when $i\geq 1$.
Let $\rC\Pic_{X_\bullet/k}$ denote the associated complex
\[
\Pic_{X_0/k}\rightarrow \Pic_{X_1/k}\rightarrow \cdots,
\]
where $\Pic_{X_0/k}$ sits in degree $0$.
Due to Proposition \ref{1.25}, there is a distinguished triangle
\begin{align*}
&\uHom_{\DMeffk}(M_0(X_\bullet),\bZ(1)[2])
\rightarrow
\underline{\rm Hom}_{\DMeffk}(M(X_\bullet),\bZ(1)[2])
\\
\rightarrow 
&\rC\Pic_{X_\bullet/k}
\rightarrow
\Hom_{\DMeffk}(M_0(X_\bullet),\bZ(1)[2])[1].
\end{align*}
This gives a long exact sequence
\begin{align*}
\cdots
\rightarrow
& h_i(\uHom_{\DMeffk}(M_0(X_\bullet),\bZ(1)[2]))
\rightarrow
h_i(\uHom_{\DMeffk}(M(X_\bullet),\bZ(1)[2]))
\\
\rightarrow
& h_i(\rC\Pic_{X_\bullet/k})
\rightarrow
h_{i-1}(\Hom_{\DMeffk}(M_0(X_\bullet),\bZ(1)[2])[1]))
\rightarrow
\cdots.
\end{align*}
Observe that
\begin{gather*}
h_i(\rC\Pic_{X_\bullet/k})=0\text{ for }i\geq 1,
\\
h_i(\uHom_{\DMeffk}(M_0(X_\bullet),\bZ(1)[2]))=0\text{ for }i\geq 2,
\\
h_1(\uHom_{\DMeffk}(M_0(X_\bullet),\bZ(1)[2]))\simeq L\otimes \bG_m.
\end{gather*}
Plug these in the above long exact sequence to finish the proof.
\end{proof}

\begin{df}
\label{1.31}
For every $X\in Sm/k$, there is a composite morphism
\[
\begin{split}
\underline{\rm Hom}_\DMeffk(M_{\geq 1}(X),\bZ(1)[2])\rightarrow &h_0(\underline{\rm Hom}_\DMeffk(M_{\geq 1}(X),\bZ(1)[2]))
\\
\stackrel{\sim}\rightarrow &{\rm Pic}_{X/k}\rightarrow {\rm NS}(X),
\end{split}
\]
where the second arrow is given in Proposition \ref{1.20}.
Let
\[
M_1^*(X)
\]
denote its cocone.
Then there is a distinguished triangle
\begin{equation}
\label{1.15.1}
M_1^*(X)\rightarrow \underline{\rm Hom}_\DMeffk(M_{\geq 1}(X),\bZ(1)[2])\rightarrow {\rm NS}(X)\rightarrow M_1^*(X)[1].
\end{equation}
\end{df}

\begin{df}
\label{1.35}
Let $G$ be a group scheme over $k$.
We say that $G$ is an {\it abelian group scheme} (resp.\ {\it semi-abelian group scheme}) if
the connected component of the identity $G^0$ is an abelian (resp.\ semi-abelian) variety and $G/G^0$ is a finitely generated abelian group.
\end{df}

\begin{prop}\label{1.36}
Let $G$ be a semi-abelian group scheme, and let $L$ be a lattice.
Regard them as Nisnevich sheaves on $Sm/k$.
Then $G\oplus L$ has a unique transfer structure.
\end{prop}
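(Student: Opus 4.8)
The plan is to treat existence and uniqueness separately, the former being immediate and the latter carrying all the content. By the conventions of \ref{0.4}, both $G$ and $L$ are already Nisnevich sheaves with transfers, and their direct sum in ${\rm Sh}^{tr}(k)$ equips the underlying Nisnevich sheaf $\mathcal{H}:=G\oplus L$ with a transfer structure. So I would focus on showing that any two transfer structures on $\mathcal{H}$ assign the \emph{same} map $\alpha^\ast\colon\mathcal{H}(T')\to\mathcal{H}(T)$ to every finite correspondence $\alpha$ from $T$ to $T'$. Write ${\rm tr}^{(1)},{\rm tr}^{(2)}$ for the two sets of transfers; both restrict to the same pullbacks along morphisms, since a morphism is determined by its graph and the sheaf structure is fixed.

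The first step is to record that $\mathcal{H}$ has generic injectivity: for connected $T\in Sm/k$ with generic point $\eta={\rm Spec}\,K$, the restriction $\mathcal{H}(T)\to\mathcal{H}(\eta)$ is injective, being an isomorphism on the constant summand $L$ and injective on $G$ because $G$ is separated and $T$ is integral. Extending $\mathcal{H}$, together with its transfers, to the residue fields by continuity of presheaves with transfers, I would then reduce to finite field extensions. A finite correspondence is a sum of integral $Z\subset T\times T'$ finite surjective over a component of $T$; for such $Z$, functoriality along the open immersions $U\hookrightarrow T$ gives ${\rm res}_\eta\circ Z^\ast=(Z_\eta)^\ast$ with $Z_\eta:=Z\times_T\eta\in{\rm Cor}(\eta,T')$. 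Now $Z_\eta$ is a sum of closed points ${\rm Spec}\,K_i\hookrightarrow T'_K$ with $K_i/K$ finite, and each factors as $\Gamma_{g_i}\circ{}^t\Gamma_{\pi_i}$ for the structure map $\pi_i\colon{\rm Spec}\,K_i\to{\rm Spec}\,K$ and a morphism $g_i$, so that $(Z_\eta)^\ast=\sum_i n_i\,{\rm tr}_{\pi_i}\circ g_i^\ast$. As the $g_i^\ast$ are determined and ${\rm res}_\eta$ is injective, it suffices to prove that ${\rm tr}_\pi$ is independent of the transfer structure for every finite extension of fields $\pi\colon{\rm Spec}\,K'\to{\rm Spec}\,K$.

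I would prove this by induction on $d=[K':K]$. The engine is the self-composition relation ${}^t\Gamma_\pi\circ\Gamma_\pi=[\,{\rm Spec}(K'\otimes_K K')\,]$ in ${\rm Cor}$, which upon applying a transfer structure yields
\[
\pi^\ast\circ{\rm tr}_\pi=[\,{\rm Spec}(K'\otimes_K K')\,]^\ast
\]
on $\mathcal{H}(K')$, for any transfer structure. Decomposing $[\,{\rm Spec}(K'\otimes_K K')\,]=m\,\Delta+\sum_k n_k[W_k]$ where $\Delta$ is the diagonal, each non-diagonal component $W_k$ has its projections $W_k\to{\rm Spec}\,K'$ of degree $<d$ (the first-projection degrees, weighted by multiplicity, sum to $d$ minus the multiplicity $m\ge 1$ of the diagonal, hence are each $<d$). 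Thus $[W_k]^\ast={\rm tr}_{W_k\to{\rm Spec}\,K'}\circ(\text{pullback})$, and by the inductive hypothesis these transfers agree for the two structures, while $m\,\Delta^\ast=m\cdot{\rm id}$ is tautologically the same. Hence the right-hand side above is independent of the structure, so $\pi^\ast\circ({\rm tr}^{(1)}_\pi-{\rm tr}^{(2)}_\pi)=0$. Finally $\pi^\ast$ is injective, since ${\rm Spec}\,K'\to{\rm Spec}\,K$ is faithfully flat and $G$ is separated (and $\pi^\ast$ is the identity on $L$), which forces ${\rm tr}^{(1)}_\pi={\rm tr}^{(2)}_\pi$ and closes the induction.

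The obstacles here are bookkeeping rather than conceptual. One must justify passing to the generic point and extending $\mathcal{H}$ with its transfers to the essentially-finite-type residue fields $K,K_i,K'$, and one must verify the degree bound on the off-diagonal components of ${\rm Spec}(K'\otimes_K K')$ uniformly across the separable, purely inseparable, and mixed cases (in the purely inseparable case the diagonal carries the full multiplicity, and the relation degenerates to $\pi^\ast\circ{\rm tr}_\pi=d\cdot{\rm id}$, which is already enough). The crucial leverage throughout is the injectivity of $\pi^\ast$, which is precisely where the separatedness of $G$ enters; this is also what renders harmless the wild $p$-primary phenomena emphasized in \ref{0.15}, since we never need to identify ${\rm tr}_\pi$ explicitly, only to see that two structures cannot differ.
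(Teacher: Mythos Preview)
Your argument is correct, but it takes a substantially different route from the paper's. The paper exploits the standing hypothesis that $k$ is algebraically closed in a much more direct way: since closed points are dense and $G$ is separated, the map
\[
F(X)\longrightarrow \prod_{x\in {\rm cl}\,X} F(\{x\})
\]
is injective, so it suffices to treat $X={\rm Spec}\,k$. But then ${\rm Cor}({\rm Spec}\,k,Y)\cong{\bf Z}[Y(k)]$ because every closed point of $Y$ is $k$-rational; hence every finite correspondence out of ${\rm Spec}\,k$ is a ${\bf Z}$-linear combination of graphs of morphisms, on which any two transfer structures must agree. That is the entire proof.

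Your reduction instead goes to the \emph{generic} point and then runs an induction on the degree of finite field extensions via the relation $\pi^\ast\circ{\rm tr}_\pi=[\,{\rm Spec}(K'\otimes_KK')\,]^\ast$ together with injectivity of $\pi^\ast$. This is sound, and it has the virtue of not using that $k$ is algebraically closed (essentially only that $k$ is perfect, so that the function fields $K_i$ are again fraction fields of smooth $k$-schemes). The cost is the bookkeeping you already flag: one must extend both transfer structures to essentially smooth $k$-schemes (the ${\rm Spec}\,K$'s), check the restriction identity ${\rm res}_\eta\circ Z^\ast=(Z_\eta)^\ast$ at the level of pro-objects, and verify the factorization $[z_i]=\Gamma_{g_i}\circ{}^t\Gamma_{\pi_i}$ there. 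In the paper's setting none of this is needed, and the argument collapses to two lines; your approach would become the natural one if the hypothesis on $k$ were weakened.
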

\begin{proof}
Set $F:=G\oplus L$.
In \cite{MR2102056}, it is shown that $G$ has a transfer structure.
Then $F$ has a transfer structure too.
For uniqueness, suppose $X,Y\in Sm/k$ and $Z\in {\rm Cor}(X,Y)$.
If $F$ has another transfer structure, then the two transfer structures and $Z$ induce homomorphisms
\[
p\colon F(Y)\rightarrow F(X)
\text{ and }
q\colon F(Y)\rightarrow F(X).\]
We need to show $p=q$.

Morphisms in $Sm/k$ are determined by closed points, so the homomorphism
\[
F(X)\rightarrow \coprod_{x\in {\rm cl}\,X}F(\{x\})
\]
induced by the closed immersions $\{x\}\rightarrow X$ is injective, where ${\rm cl}\,X$ denotes the set of closed points of $X$.
By composing $Z$ with $\{x\}\rightarrow X$ for all $x\in {\rm cl}\,X$, we reduce to the case when $X=\Spec(k)$.
In this case, there is an isomorphism
\[
{\rm Cor}({\rm Spec}\,k,Y)\simeq \bZ[Y(k)].
\]
Thus $Z$ can be written as a formal sum of morphisms $\Spec(k)\rightarrow Y$.
Since the two transfer structures agree on the level of morphisms, $p$ and $q$ are equal.
\end{proof}

\begin{rmk}
\label{1.37}
In this paper, we view any scheme $G$ as a representable sheaf on $Sm/k$.
Then $G$ and $G_{red}$ represent the same sheaf on $Sm/k$.
Indeed, if $T\in Sm/k$, then every morphism $f\colon T\rightarrow G$ canonically factors through $G_{red}$ since $T$ is reduced.
\end{rmk}
\begin{prop}
\label{1.14}
For every proper $X\in Sm/k$, the Nisnevich sheaf with transfers ${\rm Pic}_{X/k}$ is  representable by an abelian group scheme.
\end{prop}
\begin{proof}
Let $A$ be the presheaf of abelian groups on $Sch/k$ given by
\[
A(T):={\rm Pic}(X\times T)/{\rm Pic}(T)
\]
for $T\in Sch/k$.
The restriction of $A$ to $Sm/k$ is ${\rm Pic}_{X/k}$.
It is well-known that $A$ is representable by a scheme $G$ such that $G_{red}$ is an abelian group scheme.
Due to Remark \ref{1.37}, $\Pic_{X/k}$ is as Nisnevich sheaves representable by the abelian group scheme $G_{red}$.
Moreover, the transfer structure on $\Pic_{X/k}$ discussed in Definition \ref{1.12} agrees with that in \cite{MR2102056} by Proposition \ref{1.36}.
\end{proof}

\begin{prop}
\label{1.38}
For every proper simplicial scheme $X_\bullet$ in $Sm/k$, the Nisnevich sheaf with transfers $\Pic_{X_\bullet/k}$ is representable by a semi-abelian scheme group scheme.
\end{prop}
\begin{proof}
The proof is similar to that of Proposition \ref{1.14}.
Just add the reference \cite[Theorem 5.1.1]{MR2107442} saying that $\Pic_{X_\bullet/k}$ is representable by a scheme $G$ such that $G_{red}$ is a semi-abelian group scheme.
\end{proof}
\section{Hilbert's Theorem 90}
\begin{none}\label{1.8}
If $X$ is not proper over $k$, then ${\rm Pic}_{X/k}^0$ is not representable by an abelian variety.
Instead, a little improved version of \cite[Proposition 3.5.1]{MR3545132} is that there is an exact sequence
\begin{equation}\label{1.8.1}
L\rightarrow A\rightarrow {\rm Pic}_{X/k}^0\rightarrow 0
\end{equation}
of {\it \'etale} sheaves, where $L$ is a lattice, and $A$ is an abelian variety.
We need its Nisnevich version, which asserts that there is an exact sequence \eqref{1.8.1} of {\it Nisnevich} sheaves. Note that the Nisnevich version is not a consequence of the \'etale version. Indeed, for any isogeny $B\rightarrow A$ of abelian varieties, there is an exact sequence
\[
L'\rightarrow B\rightarrow {\rm Pic}_{X/k}^0\rightarrow 0
\]
of \'etale sheaves such that $L'$ is a lattice. Thus there are lots of possible choices of $A$, and we need to choose one of them fitting in \eqref{1.8.1} for the Nisnevich topology.
We do this by explicitly constructing $L$ and $A$. For this purpose, we use de Jong's alterations to reduce to the case when $X$ is smooth and projective over $k$.
\end{none}

\begin{none}
\label{1.9}
Let us begin with recalling Hilbert's theorem 90 in \'etale cohomology theory. It asserts that the induced homomorphism of cohomology groups
\[
H_{Zar}^1(X,\bG_m)\rightarrow H_{\et}^1(X,\bG_m)
\]
is an isomorphism for every $X\in Sch/k$.
The same proof works if we replace {\it Zar} by {\it Nis}, so the induced homomorphism of cohomology groups
\[
H_{Nis}^1(X,\bG_m)\rightarrow H_{\et}^1(X,\bG_m)
\]
is an isomorphism.

There is also a simplicial version. In \cite[Proposition 4.4.1]{MR1891270}, it is shown that the induced homomorphism of hypercohomology groups
\[
\bH_{Zar}^1(X_\bullet,\bG_m)\rightarrow \bH_{\et}^1(X_\bullet,\bG_m)
\]
is an isomorphism for every simplicial scheme $X_\bullet$ over $k$.
As above, the induced homomorphism of hypercohomology groups
\begin{equation}
\label{1.9.1}
\bH_{Nis}^1(X_\bullet,\bG_m)\rightarrow \bH_{\et}^1(X_\bullet,\bG_m)
\end{equation}
is an isomorphism by the same proof.

Now we apply this to study the structure of $\underline{\rm Hom}_\DMeffk(M(X),\bZ(1)[2])$ as follows.
We refer to \cite[Definition 3.1.2]{zbMATH00912445} for the definition of the $h$-topology.
\end{none}
\begin{prop}\label{1.6}
For every $h$-hypercover $p\colon X_\bullet\rightarrow X$ in $Sm/k$, the induced morphisms
  \[\tau_{\geq 0}\underline{\rm Hom}_{\DMeffk}(M(X),\bZ(1)[2])\rightarrow \underline{\rm Hom}_{\DMeffk}(M(X),\bZ(1)[2]),\]
  \[\tau_{\geq 0}\underline{\rm Hom}_{\DMeffk}(M(X),\bZ(1)[2])\rightarrow \tau_{\geq 0}\underline{\rm Hom}_{\DMeffk}(M(X_\bullet),\bZ(1)[2])\]
are isomorphisms.
\end{prop}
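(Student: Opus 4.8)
The plan is to read both assertions off the homology sheaves of the two objects in the $0$-motivic $t$-structure, using the concrete description of these sheaves already exploited in Propositions \ref{1.20} and \ref{1.25}. Write $A_X:=\underline{\rm Hom}_\dmeffk(M(X),{\bf Z}(1)[2])$ and $A_{X_\bullet}:=\underline{\rm Hom}_\dmeffk(M(X_\bullet),{\bf Z}(1)[2])$. Exactly as in the proof of Proposition \ref{1.20}, the sheaf $h_i(A_X)$ is the Nisnevich sheaf associated with the presheaf $T\mapsto H_{Nis}^{1-i}(X\times T,{\bf G}_m)$, and likewise $h_i(A_{X_\bullet})$ is the one associated with $T\mapsto H_{Nis}^{1-i}(X_\bullet\times T,{\bf G}_m)$, where now the right-hand side is the hypercohomology of the simplicial scheme $X_\bullet\times T$.

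First I would dispose of the first isomorphism. For $i\geq 2$ the degree $1-i$ is negative, so $H_{Nis}^{1-i}(X\times T,{\bf G}_m)=0$ and hence $h_i(A_X)=0$; thus $\tau_{\geq 2}A_X=0$ and the canonical comparison between $\tau_{\leq 1}A_X$ and $A_X$ is an isomorphism by \ref{2.14}(iii). The identical negativity gives $h_i(A_{X_\bullet})=0$ for $i\geq 2$ as well, since the hypercohomology spectral sequence for $X_\bullet\times T$ is concentrated in nonnegative degrees. Consequently the second isomorphism is equivalent to showing that the map $h_i(A_X)\to h_i(A_{X_\bullet})$ induced by $p$ is an isomorphism for every $i\leq 1$.

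The cases $i=0,1$ are the substance. Here $h_1$ is associated with $T\mapsto H^0(-,{\bf G}_m)$ (global units) and $h_0$ with $T\mapsto H_{Nis}^1(-,{\bf G}_m)={\rm Pic}(-)$; by Proposition \ref{1.20} the latter identifies $h_0(A_X)$ with ${\rm Pic}_{X/k}$. To compare $X$ with $X_\bullet$ I would pass to the \'etale topology: because only the degrees $0$ and $1$ intervene, the simplicial Hilbert Theorem 90 recalled in \ref{1.9} identifies $H_{Nis}^n(X\times T,{\bf G}_m)\cong H_{\et}^n(X\times T,{\bf G}_m)$ and $H_{Nis}^n(X_\bullet\times T,{\bf G}_m)\cong H_{\et}^n(X_\bullet\times T,{\bf G}_m)$ for $n\leq 1$, compatibly with $p$. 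Since $X_\bullet\times T\to X\times T$ is again an $h$-hypercover, cohomological descent for the \'etale sheaf ${\bf G}_m$ along it (\'etale descent for the smooth part, proper descent for the alteration part) gives $H_{\et}^n(X\times T,{\bf G}_m)\xrightarrow{\sim}H_{\et}^n(X_\bullet\times T,{\bf G}_m)$; in degrees $n\leq 1$ this yields, after Nisnevich sheafification, the desired isomorphisms on $h_0$ and $h_1$.

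For the remaining degrees $i\leq -1$ I must check that $h_i(A_{X_\bullet})=0$, matching $h_i(A_X)=0$. Combining the \'etale descent isomorphism $H_{\et}^{1-i}(X\times T,{\bf G}_m)\cong H_{\et}^{1-i}(X_\bullet\times T,{\bf G}_m)$ with the vanishing of the homotopy invariant Nisnevich sheaf associated with $T\mapsto H_{Nis}^{m}(X\times T,{\bf G}_m)$ for $m\geq 2$ (\cite[Vanishing Theorem 19.3]{MVW}, used exactly as in the proof of Proposition \ref{1.25}) should force $h_i(A_{X_\bullet})=0$ for $i\leq -1$. I expect the main obstacle to be precisely this bridge between the two topologies: Hilbert's Theorem 90 is available only in degrees $\leq 1$, so higher \'etale cohomology of ${\bf G}_m$ (Brauer classes and beyond) cannot be transported directly to the Nisnevich side, and the argument must therefore play the \'etale descent isomorphism off against the sheaf-theoretic vanishing of high-degree Nisnevich ${\bf G}_m$-cohomology. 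It is to remain within this controllable range that the statement is truncated at level $1$, and, as emphasized in \ref{0.15}, this careful bookkeeping is forced on us because $p$ is not inverted and we cannot reduce along isogenies in the Nisnevich topology.
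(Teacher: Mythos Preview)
Your overall strategy coincides with the paper's: compute the homotopy sheaves as Nisnevich cohomology of ${\bf G}_m$, use the simplicial Hilbert 90 of \ref{1.9} to pass to the \'etale site in degrees $0$ and $1$, and then invoke \'etale $h$-descent. Two points, however, deserve correction.

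\textbf{The truncation and the range $i\leq -1$.} You read $\tau_{\leq 1}$ homologically and conclude that the first isomorphism is trivial while the second forces you to prove $h_i(A_{X_\bullet})=0$ for $i\leq -1$. The paper's proof shows this reading is not the intended one: it translates the statement to ${\rm D}(k_{Nis},{\bf Z})$ and works with the cohomological truncation $\tau^{\leq 1}$ of $Rf_*f^*{\bf G}_m$ and of $R(fp)_*(fp)^*{\bf G}_m$. In that picture the first isomorphism is \emph{not} trivial---it amounts to $R^jf_*f^*{\bf G}_m=0$ for $j\geq 2$, which is where \cite[Vanishing Theorem 19.3]{MVW} enters---while on the simplicial side the truncation kills precisely the $R^j$ with $j\geq 2$, i.e.\ your $h_i$ with $i\leq -1$. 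So the ``obstacle'' you identify is exactly what the truncation on the $X_\bullet$-side is there to absorb; no separate vanishing argument is required, and your attempt to manufacture one would in fact fail, since $H_{Nis}^{\,\geq 2}(X_\bullet\times T,{\bf G}_m)$ need not vanish.

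\textbf{The \'etale descent step.} Your sentence ``cohomological descent for the \'etale sheaf ${\bf G}_m$ along it (\'etale descent for the smooth part, proper descent for the alteration part)'' is where the real gap lies. \'Etale ${\bf G}_m$-cohomology does \emph{not} satisfy proper descent in general (a blow-up already changes ${\rm Pic}$), so one cannot simply split an $h$-cover into ``smooth'' and ``proper'' pieces. The paper isolates this as a separate result, Proposition~\ref{1.7}, and proves it by a rational/torsion d\'evissage: with ${\bf Q}$-coefficients one uses that every object of ${\rm DM}_{\et}(k,{\bf Q})$ satisfies $h$-descent, and with ${\bf Z}/n$-coefficients one passes via the Kummer sequence (or its characteristic-$p$ analogue ${\bf G}_m/{\bf G}_m^n$) to \'etale ${\bf Z}/n$-sheaves, for which $h$-hypercovers are of cohomological descent. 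Combining the two shows that the cone is both uniquely divisible and torsion, hence zero. This argument is the substantive core of the proof, and you should supply it rather than cite ``proper descent for ${\bf G}_m$''.
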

\begin{proof}
Let $f\colon X\rightarrow k$ be the structure morphism. By \cite[Proposition 3.2.8]{MR1764202}, we need to show that the induced morphisms
\begin{gather*}
\tau_{\geq -1}Rf_*f^*\bG_m\rightarrow Rf_*f^*\bG_m,
\\
\tau_{\geq -1}Rf_*f^*\bG_m\rightarrow \tau_{\geq -1}Rf_*Rp_*p^*f^*\bG_m
\end{gather*}
in the derived category ${\rm D}(k_{Nis},\bZ)$ of Nisnevich sheaves of abelian groups on the small Nisnevich site $k_{Nis}$ are isomorphisms, where $\tau_{\geq i}$ denotes the usual truncation functor.
In other words, we need to show that
  \begin{enumerate}
    \item[(1)] $R^if_*f^*\bG_m=0$ for $i\neq 0,1$,
    \item[(2)] the induced morphism $R^if_*f^*\bG_m\rightarrow R^i(fp)_*(fp)^*\bG_m$ is an isomorphism for $i=0,1$.
  \end{enumerate}
We reduce to showing that for every local henselian scheme $Y$ over $k$,
  \begin{enumerate}
    \item[(1)'] $H_{Nis}^i(Y\times X,\bG_m)=0$ for $i\neq 0,1$,
    \item[(2)'] the induced homomorphism $H_{Nis}^i(Y\times X,\bG_m)\rightarrow \bH_{Nis}^i(Y\times X_\bullet,\bG_m)$ is an isomorphism for $i=0,1$.
  \end{enumerate}
By \cite[Vanishing Theorem 19.3]{MVW}, $H_{Nis}^i(T,\bG_m)=0$ for every $i\neq 0,1$ and $T\in Sm/k$.
This implies (1)' since $H_{Nis}^i(-,\bG_m)$ commutes with filtered limits by \cite[Corollarie VI.8.7.7]{SGA4}.
Hence it remains to show (2)'.

Due to Hilbert's Theorem 90 \eqref{1.9.1}, it suffices to show that the induced homomorphism
  \[H_{\et}^i(Y\times X,\bG_m)\rightarrow \bH_{\et}^i(Y\times X_\bullet,\bG_m)\]
  is an isomorphism for every integer $i$.
By applying \cite[Corollarie VI.8.7.7]{SGA4} to Proposition \ref{1.7} below we conclude.
\end{proof}
\begin{prop}\label{1.7}
Suppose $X\in Sm/k$ and $p\colon X_\bullet\rightarrow X$ is an $h$-hypercover.
Then the induced homomorphism
  \[H_{\et}^i(X,\bG_m)\rightarrow \bH_{\et}^i(X_\bullet,\bG_m)\]
is an isomorphism for every integer $i$.
\end{prop}
\begin{proof}
  Let $f\colon X\rightarrow k$ be the structure morphism, and let $K$ be a cone of the induced morphism
  \[R_{\et}f_*f^*\bG_m\rightarrow R_{\et}f_*R_{\et}p_*p^*f^*\bG_m\]
  in the derived category ${\rm D}(k_{\et},\bZ)$ of \'etale sheaves of abelian groups on the small \'etale site $k_{\et}$. We only need to show the vanishing
  \[{\rm Hom}_{{\rm D}(k_{\et},\bZ)}(\bZ,K[i])=0\]
  for every integer $i$ since the category of \'etale sheaves of abelian groups on $k_{\et}$ is equivalent to the category of $k$-modules.

  By \cite[Theorems 16.1.3, 16.2.18, Corollary 16.2.22]{CD19}, the two motives $M(X)$ and $M(X_\bullet)$ are isomorphic in ${\rm DM}_{\et}(k,{\bf Q})$.
Together with \cite[Corollary 1.8.5]{MR3545132}, we have isomorphisms
  \[\begin{split}
    H_{\et}^i(X,\bG_m\otimes {\bf Q})&\simeq {\rm Hom}_{{\rm DM}_{\et}(k,{\bf Q})}(M(X),\bG_m[i])\\
    &\simeq {\rm Hom}_{{\rm DM}_{\et}(k,{\bf Q})}(M(X_\bullet),\bG_m[i])\\
    &\simeq H_{\et}^i(X_\bullet,\bG_m\otimes {\bf Q}).
  \end{split}\]
It follows that for every integer $i$, we have the vanishing
\[{\rm Hom}_{{\rm D}(k_{\et},\bZ)}(\bZ,K\otimes {\bf Q}[i])=0.\]
By \cite[Corollarie VI.5.3]{SGA4}, this becomes
\[
{\rm Hom}_{{\rm D}(k_{\et},\bZ)}(\bZ,K[i])\otimes {\bf Q}=0.
\]
Hence ${\rm Hom}_{{\rm D}(k_{\et},\bZ)}(\bZ,K[i])$ is a torsion abelian group.
It remains to show that for every integer $i$ and prime $n$ we have the vanishing
\begin{equation}
\label{1.7.1}
{\rm Hom}_{{\rm D}(k_{\et},\bZ)}(\bZ,K[i])\otimes \bZ/n=0.
\end{equation}

There are exact sequences of \'etale sheaves
\[0\rightarrow \bG_m\stackrel{\cdot n}\rightarrow \bG_m\rightarrow \bG_m/\bG_m^n\rightarrow 0\quad \text{if }\,{\rm char}\,k\,\vert \,n,\]
\[0\rightarrow \mu_n\rightarrow \bG_m\stackrel{\cdot n}\rightarrow \bG_m\rightarrow 0\quad \text{if }\,{\rm char}\,k\nmid n.\]
Set $F:=\bG_m/\bG_m^n$ if ${\rm char}\,k\,\lvert\,n$ and $F:=\mu_n$ if ${\rm char}\,k\nmid n$. Then $F$ is an \'etale sheaf of $\bZ/n$-modules. In the proof of \cite[Proposition 5.3.3]{MR3477640}, it is shown that any $h$-cover is universally of cohomological descent with respect to the fibered category of \'etale sheaves of $\bZ/n$-modules. By \cite[Theorem 7.10]{Con}, any $h$-hypercover is universally of cohomological descent. This implies that  for every integer $i$ the induced homomorphism
  \[H_{\et}^i(X,F)\rightarrow \bH_{\et}^i(X_\bullet,F)\]
is an isomorphism.
Thus for every integer $i$, we have the vanishing
\[
{\rm Hom}_{{\rm D}(k_{\et},\bZ)}(\bZ,K\otimes \bZ/n[i])=0.
\]
This implies the vanishing \eqref{1.7.1}.
\end{proof}

\section{\texorpdfstring{$0$}{0}-motivic sheaves}
\begin{df}
  A constant sheaf on $Sm/k$ is called a $0$-{\it motivic sheaf}. We denote by ${\rm Sh}^{0-{\rm mot}}(k)$ the category of $0$-motivic sheaves.
\end{df}
\begin{rmk}\label{1.27}
  The definition of $0$-motivic sheaves is simpler than that in \cite{MR2494373} since we assume that $k$ is algebraically closed.
\end{rmk}
\begin{none}
\label{1.28}
Recall from \cite[Corollary 1.2.5]{MR2494373} that a canonical exact fully faithful functor
\[
\sigma_0\colon {\rm Sh}^{0-{\rm mot}}(k)\rightarrow {\rm Sh}^{tr}(k)
\]
admits a left adjoint
\[
\pi_0\colon {\rm Sh}^{tr}(k)\rightarrow {\rm Sh}^{0-{\rm mot}}(k)
\]
such that $\pi_0(\bZ^{tr}(X))\simeq \bZ^r$ if $X$ has $r$ connected components.
Observe that we have the vanishing
\begin{equation}
\label{1.28.2}
\pi_0(\bZ_{\geq 1}^{tr}(X))=0.
\end{equation}
  
Moreover, \cite[Corollary 2.3.3]{MR2494373} gives a functor
\[
L\pi_0\colon \rD(\Sh^{tr}(k))\rightarrow \rD(\Sh^{0-{\rm mot}}(k))
\]
that is left adjoint to a canonical fully faithful functor
\[
\sigma_0\colon \rD(\Sh^{0-{\rm mot}}(k))\rightarrow \rD(\Sh^{tr}(k)).
\]
Since $\sigma_0$ is fully faithful, for every abelian group $N$ we have isomorphisms
\begin{equation}
\label{1.28.1}
L\pi_0(N)\simeq (L\pi_0\circ \sigma_0)(N)\simeq N.
\end{equation}
\end{none}

\begin{rmk}
If $G$ is a semi-abelian group scheme, then $\pi_0(G)$ has two different meanings.
The first one is the set of connected components of $\pi_0(G)$.
The second one is in \ref{1.28}.
We will usually talk about the second one.
If we need to talk about the first one, we will write $\pi_0'(G)$ to avoid confusion.
\end{rmk}

\begin{prop}\label{1.24}
Suppose $X\in Sm/k$ and $F$ is a constant sheaf on $Sm/k$.
Then we have the vanishing
  \[\underline{\rm Hom}_\DMeffk(M_{\geq 1}(X),F)=0.\]
\end{prop}
\begin{proof}
  We may assume that $X$ is integral.
  For every $T\in Sm/k$ and integer $i$, it suffices to show that the induced homomorphism
  \[{\rm Hom}_\DMeffk(M(T),F[i])\rightarrow {\rm Hom}_\DMeffk(M(T\times X),F[i])\]
  is an isomorphism. Hence we just need to prove that the induced homomorphism
  \begin{equation}\label{1.24.1}
  H_{Nis}^i(k,F)\rightarrow H_{Nis}^i(X,F)
  \end{equation}
  is an isomorphism.

  Assume $F=\bZ$.
  Then by \cite[Vanishing Theorem 19.3]{MVW}, $H_{Nis}^i(X,F)=0$ for $i>0$. For $i=0$, \eqref{1.24.1} is an isomorphism since $\bZ$ is constant. Thus we are done for this case. We also have that \eqref{1.24.1} is an isomorphism if $F$ is a lattice.

  Now we treat the general case. Since $F$ is a colimit of lattices, there is an exact sequences
  \[0\rightarrow F''\rightarrow F'\rightarrow F\rightarrow 0\]
  of constant sheaves on $Sm/k$ such that $F'$ and $F''$ are direct sums of lattices. Then $F'$ and $F''$ are filtered colimit of lattices. Since $H_{Nis}^i(X,-)$ commutes with filtered colimits by \cite[Corollarie VI.5.3]{SGA4}, the induced homomorphisms
  \[H_{Nis}^i(k,F')\rightarrow H_{Nis}^i(X,F'),\quad H_{Nis}^i(k,F'')\rightarrow H_{Nis}^i(X,F'')\]
  are isomorphisms as above. Thus \eqref{1.24.1} is an isomorphism by the five lemma.
\end{proof}
\begin{prop}\label{1.21}
For every $X\in Sm/k$, there is an isomorphism
\[
\pi_0(\Pic_{X/k})\simeq \NS(X).
\]
\end{prop}
\begin{proof}
There is an exact sequence
\[
0\rightarrow \Pic_{X/k}^0\rightarrow \Pic_{X/k}\rightarrow \NS(X)\rightarrow 0.
\]
Since $\pi_0$ is left adjoint to the inclusion functor, $\pi_0$ is right exact.
Thus there is an exact sequence
\[
\pi_0(\Pic_{X/k}^0)\rightarrow \pi_0(\Pic_{X/k})\rightarrow \pi_0(\NS(X))\rightarrow 0.
\]
Since $\NS(X)$ is constant, $\pi_0(\NS(X))\simeq \NS(X)$.
Hence it suffices to show that we have the vanishing $\pi_0({\rm Pic}_{X/k}^0)=0$.

Since $\pi_0({\rm Pic}_{X/k}^0)$ is constant, we only need to show the vanishing
\[
\pi_0({\rm Pic}_{X/k}^0)(k)=0.
\]
Let $a$ be an element of $\pi_0({\rm Pic}_{X/k}^0)(k)$, and let
\[
Z\in {\rm Pic}_{X/k}^0(k)={\rm Pic}^0(X)
\]
be an element whose image in $\pi_0({\rm Pic}_{X/k}^0)(k)$ is $a$.
By definition, $Z$ is algebraically equivalent to $0$.
Thus there is a smooth and separated curve $C$ and an element $W$ of ${\rm Pic}_{X/k}^0(C)$ such that the pullbacks of $W$ to some closed points $x_1$ and $x_2$ are $Z$ and $0$. Then $W$ gives a morphism $\bZ^{tr}(C)\rightarrow {\rm Pic}_{X/k}^0$ of Nisnevich sheaves with transfers. The composite morphism
\[
\bZ\rightarrow \bZ^{tr}(C)\rightarrow {\rm Pic}_{X/k}^0\]
is zero, where the first arrow is induced by $x_2$.
Thus this induces a morphism
\[
\bZ_{\geq 1}^{tr}(C)\rightarrow {\rm Pic}_{X/k}^0
\]
of Nisnevich sheaves with transfers.
Note that $Z$ is in its image of
\[
\bZ_{\geq 1}^{tr}(C)(k)\rightarrow {\rm Pic}_{X/k}^0(k).
\]
Taking $\pi_0$, we get a homomorphism
\[
\pi_0(\bZ_{\geq 1}^{tr}(C))(k)\rightarrow \pi_0({\rm Pic}_{X/k}^0)(k).
\]
Then $a$ is in its image.
By \eqref{1.28.2}, $\pi_0(\bZ_{\geq 1}^{tr}(C))=0$. Thus $a=0$.
\end{proof}
\begin{prop}\label{1.29}
  Let $G$ be a semi-abelian variety, and consider the functor $\pi_0$ in {\rm \ref{1.28}}. Then $\pi_0(G)=0$.
\end{prop}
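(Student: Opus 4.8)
The plan is to follow the pattern of Proposition \ref{1.21}, but to replace the auxiliary curve used there by the semi-abelian variety $G$ itself. This is possible precisely because, unlike ${\rm Pic}_{X/k}^0$ for non-proper $X$, the sheaf $G$ is representable by a connected object of $Sm/k$: a semi-abelian variety is a smooth separated connected group scheme of finite type over $k$, hence a connected object of $Sm/k$. Since $\pi_0(G)$ is a constant sheaf, it is determined by $\pi_0(G)(k)$, so it suffices to prove $\pi_0(G)(k)=0$. As in \ref{1.21}, I would take an arbitrary $a\in \pi_0(G)(k)$ and lift it to $g\in G(k)$ whose image under the unit $G(k)\rightarrow \pi_0(G)(k)$ is $a$.

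The key construction is a morphism out of ${\bf Z}_{\geq 1}^{tr}(G)$ hitting $g$. The identity is an element ${\rm id}_G\in G(G)$, which by the Yoneda-type identification for sheaves with transfers corresponds to a morphism $\phi:{\bf Z}^{tr}(G)\rightarrow G$ in ${\rm Sh}^{tr}(k)$, with $\phi_T([s])=s^*{\rm id}_G=s$ for $s\in G(T)$. The composite ${\bf Z}\rightarrow {\bf Z}^{tr}(G)\stackrel{\phi}\rightarrow G$ induced by the identity point $0\in G(k)$ sends $1$ to $0^*{\rm id}_G=0$, so it vanishes; hence $\phi$ factors through a morphism
\[{\bf Z}_{\geq 1}^{tr}(G)\rightarrow G\]
of Nisnevich sheaves with transfers. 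Since $G$ is connected, the classes $[g]$ and $[0]$ lie in the same connected component, so $[g]-[0]$ lies in ${\bf Z}_{\geq 1}^{tr}(G)(k)$, and it maps to $g^*{\rm id}_G-0^*{\rm id}_G=g$ in $G(k)$. Thus $g$ lies in the image of ${\bf Z}_{\geq 1}^{tr}(G)(k)\rightarrow G(k)$.

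To conclude, I would apply the right exact functor $\pi_0$. By \ref{1.28} we have $\pi_0({\bf Z}_{\geq 1}^{tr}(G))=0$, so the homomorphism $\pi_0({\bf Z}_{\geq 1}^{tr}(G))(k)\rightarrow \pi_0(G)(k)$ has trivial source. Chasing $g$ through the commutative square relating the units of ${\bf Z}_{\geq 1}^{tr}(G)$ and $G$, its image $a$ in $\pi_0(G)(k)$ must be $0$. As $a$ was arbitrary, $\pi_0(G)(k)=0$ and therefore $\pi_0(G)=0$. The only point needing care is the translation of ${\rm id}_G$ into a transfer-compatible morphism $\phi$ and the verification that $g$ is genuinely in its image on $k$-points; both are routine given that $G$ carries the sheaf-with-transfers structure of \cite{MR2102056} and that $G$ is connected, so that the decomposition of \ref{1.15} applies with $r=1$.
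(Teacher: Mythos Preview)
Your argument is correct and takes a genuinely different route from the paper's proof. The paper decomposes $G$ via the exact sequence $0\to L\otimes{\bf G}_m\to G\to A\to 0$, uses right exactness of $\pi_0$, and then treats the two pieces separately: $\pi_0({\bf G}_m)=0$ because ${\bf G}_m\cong{\bf Z}_{\geq 1}^{tr}({\bf G}_m)$, and $\pi_0(A)=0$ by identifying $A\cong{\rm Pic}_{A^\vee/k}^0$ and invoking Proposition~\ref{1.21}. Your proof instead runs the argument of Proposition~\ref{1.21} directly on $G$, replacing the auxiliary curve $C$ there by $G$ itself; this works precisely because a semi-abelian variety is already a connected object of $Sm/k$, so the identity map furnishes the required morphism ${\bf Z}_{\geq 1}^{tr}(G)\to G$ hitting every $k$-point.

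Your approach is more elementary: it avoids the structure exact sequence for $G$, avoids the duality $A\cong{\rm Pic}_{A^\vee/k}^0$, and does not depend on Proposition~\ref{1.21} at all. In fact it proves the more general statement that $\pi_0(F)=0$ whenever $F$ is a Nisnevich sheaf with transfers represented by a connected object of $Sm/k$ carrying a rational point mapping to~$0$. The paper's proof, by contrast, is more modular and highlights how the result assembles from the toric and abelian cases already in hand. The one step you flag as needing care---surjectivity of the unit $G(k)\to\pi_0(G)(k)$---is used verbatim in the paper's proof of Proposition~\ref{1.21} and follows from the description of $\pi_0$ on representables together with the fact that every connected smooth $k$-scheme has a rational point since $k$ is algebraically closed.
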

\begin{proof}
Consider a canonical exact sequence
\[
0\rightarrow L\otimes \bG_m\rightarrow G\rightarrow A\rightarrow 0
\]
of Nisnevich sheaves with transfers, where $L$ is a lattice, and $A$ is an abelian variety. Since $\pi_0$ is left adjoint to the inclusion functor, it is right exact. Thus there is an exact sequence
\begin{equation}
\label{1.29.1}
\pi_0(L\otimes \bG_m)\rightarrow \pi_0(G)\rightarrow \pi_0(A)\rightarrow 0.
\end{equation}
Since $\bG_m\simeq \bZ_{\geq 1}^{tr}(\bG_m)$ in $\DMeffk$, \eqref{1.28.2} gives $\pi_0(\bG_m)=0$.
By Proposition \ref{1.21},
\[
\pi_0(A)\simeq \pi_0({\rm Pic}_{A^\vee/k}^0)= 0.
\]
Thus we have the vanishing $\pi_0(G)=0$ from \eqref{1.29.1}.
\end{proof}
\begin{prop}\label{1.33}
Let $G$ be a semi-abelian variety. Then for every integer $i\leq 1$, we have the vanishing
\[
{\rm Hom}_\DMeffk(G,\bZ[i])=0.
\]
\end{prop}
\begin{proof}
Since $G$ and $\bZ$ are in the heart of the $0$-motivic $t$-structure, we are done when $i<0$.
For $i=0$, we are done by Proposition \ref{1.29}.
Hence it remains to deal with the case $i=1$.
  
In \cite[Paragraph 2.1.5]{zbMATH03815835} there is an exact sequence
\begin{equation}\label{1.33.1}
\bZ[G\times G\times G]\oplus \bZ[G\times G]\stackrel{v}\rightarrow \bZ[G\times G]\stackrel{u}\rightarrow \bZ[G]\rightarrow G\rightarrow 0
\end{equation}
of Nisnevich sheaves (without transfers).
Here, $u$ and $v$ are given by the formula
\begin{equation}
\label{1.33.3}
\begin{split}
u([x,y])=[x]&+[y]-[x+y],
\\
v([x,y,z],[p,q])=[y,z]-[x+y,z]&+[x,y+z]-[x,y]+[p,q]-[q,p].
\end{split}
\end{equation}
Let $\Sh(Sm/k,\bZ)$ denote the category of Nisnevich sheaves of abelian groups on $Sm/k$. The induced sequence
\begin{equation}\label{1.33.2}
\begin{split}
&{\rm Hom}_{\Sh_{Nis}(Sm/k,\bZ)}(\bZ[G],\bZ)\rightarrow {\rm Hom}_{\Sh_{Nis}(Sm/k,\bZ)}(\bZ[G\times G],\bZ)\\
\rightarrow &{\rm Hom}_{\Sh_{Nis}(Sm/k,\bZ)}(\bZ[G\times G\times G]\oplus \bZ[G\times G],\bZ)
\end{split}
\end{equation}
is isomorphic to the induced sequence
\[
\begin{split}
&\bZ[{\rm Hom}_{Sm/k}(G,{\rm Spec}\,k)]\rightarrow \bZ[{\rm Hom}_{Sm/k}(G\times G,{\rm Spec}\,k)]
\\
\rightarrow &\bZ[{\rm Hom}_{Sm/k}(G\times G\times G,{\rm Spec}\,k)]\oplus \bZ[{\rm Hom}_{Sm/k}(G\times G,{\rm Spec}\,k)].
\end{split}
\]
With the formula \eqref{1.33.3} we see that this sequence is isomorphic to
\[
\bZ\stackrel{{\rm id}}\rightarrow \bZ\stackrel{0}\rightarrow \bZ\oplus \bZ.
\]
Note that this is an exact sequence.
Moreover, for every $X\in Sm/k$ and integer $i>0$, we have the vanishing
\[
{\rm Ext}_{{\rm Sh}_{Nis}(Sm/k,\bZ)}^i(\bZ[X],\bZ)\simeq H_{Nis}^i(X,\bZ)=0.
\]
Thus by \eqref{1.33.1} and the exactness of \eqref{1.33.2}, we have the vanishing
\begin{equation}
\label{1.33.4}
{\rm Ext}_{{\rm Sh}_{Nis}(Sm/k,\bZ)}^1(G,\bZ)=0.
\end{equation}

Suppose that
\[
0\rightarrow \bZ\rightarrow F\rightarrow G\rightarrow 0
\]
is an exact sequence of Nisnevich sheaves with transfers.
The vanishing \eqref{1.33.4} shows that $F\simeq \bZ\oplus G$ in the category of Nisnevich sheaves.
Then by Proposition \ref{1.36}, $F\simeq \bZ\oplus G$ in the category of Nisnevich sheaves with transfers.
It follows that we have the vanishing
\[
{\rm Ext}_{{\rm Sh}^{tr}(k)}^1(G,\bZ)=0.
\]
To finish the proof observe that $\bZ$ is $\bA^1$-local.
\end{proof}

\begin{prop}\label{2.13}
Let $G$ be a semi-abelian variety.
Then the motive
\[
\underline{\rm Hom}_\DMeffk(\bG_m,G)
\]
is a lattice.
\end{prop}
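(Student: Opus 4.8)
The plan is to reduce to the two building blocks of a semi-abelian variety and to compute $\underline{\rm Hom}_\dmeffk({\bf G}_m,-)$ on each. First I would write $G$ as an extension
\[0\rightarrow L\otimes {\bf G}_m\rightarrow G\rightarrow A\rightarrow 0\]
of an abelian variety $A$ by its torus part $L\otimes {\bf G}_m$, where $L$ is the cocharacter lattice, exactly as in the proof of Proposition \ref{1.29}. Since each term is a homotopy invariant Nisnevich sheaf with transfers by Proposition \ref{2.18}, this short exact sequence induces a distinguished triangle
\[L\otimes {\bf G}_m\rightarrow G\rightarrow A\rightarrow (L\otimes {\bf G}_m)[1]\]
in $\dmeffk$. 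Applying the triangulated functor $\underline{\rm Hom}_\dmeffk({\bf G}_m,-)$, it then suffices to treat the torus part and the abelian part separately.

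For the torus part I would invoke the cancellation theorem (\cite{MR2804268}). Since ${\bf G}_m\cong {\bf Z}(1)[1]$ in $\dmeffk$, cancellation gives $\underline{\rm Hom}_\dmeffk({\bf G}_m,{\bf G}_m)\cong \underline{\rm Hom}_\dmeffk({\bf Z}(1),{\bf Z}(1))\cong {\bf Z}$, and hence
\[\underline{\rm Hom}_\dmeffk({\bf G}_m,L\otimes {\bf G}_m)\cong L\otimes {\bf Z}\cong L,\]
which is constant.

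The main point is to show that $\underline{\rm Hom}_\dmeffk({\bf G}_m,A)=0$ for an abelian variety $A$. For any smooth $T$ and any $i$, adjunction gives ${\rm Hom}_\dmeffk(M(T),\underline{\rm Hom}_\dmeffk({\bf G}_m,A)[i])\cong {\rm Hom}_\dmeffk(M(T)\otimes {\bf G}_m,A[i])$, which is the reduced summand of $H_{Nis}^i(T\times {\bf G}_m,A)$ cut out by the unit section. By the motivic decomposition $M({\bf G}_m)\cong {\bf Z}\oplus {\bf Z}(1)[1]$ together with the contraction formula for homotopy invariant sheaves with transfers, this summand is isomorphic to $H_{Nis}^i(T,A_{-1})$, where $A_{-1}$ denotes the contraction of $A$. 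So everything reduces to proving $A_{-1}=0$. As $A_{-1}$ is again a homotopy invariant Nisnevich sheaf with transfers, it is enough to check $A_{-1}(\ell)=0$ for every finitely generated field extension $\ell/k$. Now $A_{-1}(\ell)$ is the group of morphisms ${\bf G}_{m,\ell}\rightarrow A_\ell$ that vanish at the unit; by Weil's extension theorem any such rational map ${\bf P}^1_\ell\dashrightarrow A_\ell$ is a morphism, and a morphism ${\bf P}^1_\ell\rightarrow A_\ell$ from a complete rational curve to an abelian variety is constant, hence equal to its value $0$ at the unit. Thus $A_{-1}(\ell)=0$, so $A_{-1}=0$ and $\underline{\rm Hom}_\dmeffk({\bf G}_m,A)=0$.

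Feeding these two computations into the triangle above then yields $\underline{\rm Hom}_\dmeffk({\bf G}_m,G)\cong L$, which is constant, as desired. The delicate points I anticipate are the identification of the reduced ${\bf G}_m$-cohomology with the contraction $A_{-1}$ (a standard but bookkeeping-heavy consequence of the decomposition of $M({\bf G}_m)$) and the reduction of the sheaf-theoretic vanishing of $A_{-1}$ to its values on fields; the underlying geometric input, namely that pointed maps from ${\bf G}_m$ to an abelian variety are trivial, is elementary.
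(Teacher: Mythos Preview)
Your proposal is correct and follows essentially the same route as the paper: reduce via the extension $0\to L\otimes{\bf G}_m\to G\to A\to 0$, handle the torus part by cancellation, and show $\underline{\rm Hom}_\dmeffk({\bf G}_m,A)=A_{-1}=0$ for the abelian part. The only cosmetic difference is in the last step: you reduce to field values and invoke Weil's extension theorem plus the absence of rational curves on $A$, while the paper argues directly that any $f:X\times{\bf G}_m\to A$ is constant along the ${\bf G}_m$ factor by checking at closed points of $X$; the underlying geometric input is the same.
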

\begin{proof}
There is a distinguished triangle
\[
L\otimes \bG_m\rightarrow G\rightarrow A\rightarrow L\otimes \bG_m[1],
\]
where $L$ is a lattice, and $A$ is an abelian variety. By the cancellation theorem \cite{MR2804268},
there is an isomorphism
\[
\underline{\rm Hom}_\DMeffk(\bG_m,L\otimes \bG_m)\simeq L.
\]
Thus it suffices to show the vanishing
\begin{equation}
\label{2.13.1}
\underline{\rm Hom}_\DMeffk(\bG_m,A)=0.
\end{equation}

By Proposition \ref{1.25} there is a distinguished triangle
\[
A\to \uHom_{\DMeffk}(M_{\geq 1}(A^\vee),\bZ(1)[2])\to \NS(A^\vee)\to A[1].
\]
Proposition \ref{1.24} gives the vanishing
\[
\uHom_{\DMeffk}(\bG_m,\NS(A^\vee))=0.
\]
Hence it suffices to show that
\begin{align*}
&\uHom_{\DMeffk}(\bG_m,\uHom_{\DMeffk}(M_{\geq 1}(A^\vee),\bZ(1)[2]))
\\
\simeq &
\uHom_{\DMeffk}(\bG_m\otimes M_{\geq 1}(A^\vee),\bZ(1)[2])
\\
\simeq &
\uHom_{\DMeffk}(M_{\geq 1}(A^\vee),\bZ[1])
\end{align*}
is vanishing, where the second isomorphism is given by the cancellation theorem \cite{MR2804268}.
Apply Proposition \ref{1.24} again to conclude.
\end{proof}

\section{Structure of \texorpdfstring{$M_1^*(X)$}{M1*(X)}}
The purpose of this section is to construct an isomorphism
\[
M_1^*(X)[-1]\simeq[L\rightarrow B],
\]
where $L$ is a lattice, and $B$ is a semi-abelian variety.

\begin{df}\label{1.23}
If $U\to X$ be an open immersion in $Sm/k$, we set
\[
M(X/U):=C_*(\bZ^{tr}(X)/\bZ^{tr}(U)),
\]
and consider it as an object of $\DMeffk$.
\end{df}

\begin{prop}
\label{2.49}
Let $j\colon U\to X$ be an open immersion of integral schemes in $Sm/k$.
Then there is an isomorphism
\[
\uHom_\DMeffk(M(X/U),\bZ(1)[2])
\simeq
\bZ^s
\]
in $\DMeffk$, where $s$ is the number of irreducible components in $X-U$ whose dimensions are equal to $\dim X-1$.
\end{prop}
\begin{proof}
For every integer $i\geq 0$ and $X\in Sm/k$, let $z^i(X,*)$ be the cycle complex in \cite[Definition 17.1]{MVW}.
According to \cite[Theorem 19.8]{MVW}, there is a quasi-isomorphism
\[
\bZ(1)[2]\xrightarrow{\sim} z^i(-\times \bA^1,*)
\]
of complexes of Zariski sheaves.
This gives a canonical isomorphism
\[
\uHom_\DMeffk(M(Y),\bZ(1)[2])\xrightarrow{\sim} z^1(-\times Y\times \bA^1,*)
\]
in $\DMeffk$ for $Y\in Sm/k$.
Hence $\uHom_\DMeffk(M(X/U),\bZ(1)[2])$ can be identified with a cone of
\[
z^1(-\times X\times \bA^1,*)\to z^1(-\times U\times \bA^1,*).
\]
Together with Bloch's localization sequence \cite{zbMATH00653321} and \cite[Proposition 19.12]{MVW}, we deduce that for every integral $T\in Sm/k$ and integer $i$ there is an isomorphism
\begin{equation}
\label{2.49.1}
{\rm Hom}_{\DMeffk}(M(T),\uHom_\DMeffk(M(X/U),\bZ(1)[2])[-i])
\simeq
CH_{n-1+r}(Z\times T,i),
\end{equation}
where $Z:=X-U$, $n:=\dim X$, and $r:=\dim T$.
We also have an isomorphism
\begin{equation}
\label{2.49.2}
CH_{n-1+r}(Z\times T,i)
\simeq
\left\{
\begin{array}{cc}
\bZ^s & \text{if }i=0,
\\
0 & \text{oterwise}
\end{array}
\right.
\end{equation}
since $s$ the number of irreducible components in $Z\times T$ whose dimensions are equal to $n-1+r$.
Combine \eqref{2.49.1} and \eqref{2.49.2} to have a desired isomorphism.
\end{proof}

\begin{lem}
\label{1.40}
Any subsheaf $\cG$ of a constant sheaf $\cF$ on $Sm/k$ is again constant.
In particular, any subsheaf of a lattice is again a lattice.
\end{lem}
\begin{proof}
Suppose $\cF$ is associated with a set $T$.
For every integral $X\in Sm/k$, there is a rational point of $X$.
Hence there is a commutative diagram
\[
\begin{tikzcd}
\cG(k)\ar[rd,hookrightarrow]\ar[r,"b"]&
\cG(X)\ar[d,hookrightarrow]\ar[r,"a"]&
\cG(k)\ar[ld,hookrightarrow,"c"]
\\
&T.
\end{tikzcd}
\]
Since $ab$ is bijective, $a$ is surjective.
Since $ca$ is injective, $a$ is injective.
Hence $a$ is bijective.
\end{proof}

\begin{prop}\label{1.10}
For every $X\in Sm/k$, there is an isomorphism
\begin{equation}
\label{1.10.1}
\underline{\rm Hom}_\DMeffk(M_{\geq 1}(X),\bZ(1)[2])[-1]
\simeq
[L\rightarrow B]
\end{equation}
in $\DMeffk$ for some lattice $L$ and homotopy invariant Nisnevich sheaf with transfers $B$ such that $\ker(B\to \pi_0(B))$ is a semi-abelian variety,
\end{prop}
\begin{proof}
We may assume that $X$ is integral by constructing \eqref{1.10.1} for every connected component.
Due to \cite[Section 1]{MR1423020}, there exists a diagram
\[
X\xleftarrow{p} X_\bullet \xrightarrow{j} \overline{X}_\bullet
\]
of simplicial schemes in $Sm/k$, where each $\overline{X}_i$ is proper over $k$, $p$ is an $h$-hypercover, and each $X_i\to \overline{X}_i$ is an open immersion whose reduced complement is a strict normal crossing divisor,

There is a canonical distinguished triangle
\[
M(X_\bullet)\rightarrow M(\overline{X}_\bullet)\rightarrow M(\overline{X}_\bullet/X_\bullet)\rightarrow M(X_\bullet)[1].
\]
This induces a canonical distinguished triangle
\begin{equation}
\label{1.10.2}
\begin{split}
&\uHom_\DMeffk(M(\overline{X}_\bullet/X_\bullet),\bZ(1)[2])
\rightarrow
\uHom_\DMeffk(M(\overline{X}_\bullet),\bZ(1)[2])
\\
\rightarrow &
\uHom_\DMeffk(M(X_\bullet),\bZ(1)[2])
\rightarrow
\uHom_\DMeffk(M(\overline{X}_\bullet/X_\bullet),\bZ(1)[2])[1].
\end{split}
\end{equation}
Proposition \ref{2.49} gives an isomorphism
\[
\underline{\rm Hom}_\DMeffk(M(\overline{X}_i/X_i),\bZ(1)[2])\simeq \bZ^{d_i},
\]
where $d_i$ is the number of irreducible components of $\overline{X}_i-X_i$.
We set
\[
F:=\uHom_\DMeffk(M(\overline{X}_\bullet),\bZ(1)[2])
\]
for simplicity.
Use the homotopy $t$-structure to \eqref{1.10.2} to find a distinguished triangle
\begin{equation}
\label{1.10.3}
[\bZ^{d_0}\rightarrow N]
\rightarrow
\tau_{\geq 0}F
\rightarrow
\tau_{\geq 0}\underline{\rm Hom}_\DMeffk(M(X_\bullet),\bZ(1)[2])
\rightarrow
[\bZ^{d_0}\rightarrow N][1],
\end{equation}
where
\[
N:=\ker(\ker(\bZ^{d_1}\rightarrow \bZ^{d_2})\rightarrow h_{-1}F).
\]
Due to Lemma \ref{1.40}, $N$ is a lattice.

By Proposition \ref{1.6}, there is an isomorphism
\begin{equation}
\label{1.10.4}
\tau_{\geq 0}\underline{\rm Hom}_\DMeffk(M(X_\bullet),\bZ(1)[2])
\simeq
\uHom_\DMeffk(M(X),\bZ(1)[2]).
\end{equation}
The kernel of
\[
\bZ^{\pi_0(X_0)}\rightarrow \bZ^{\pi_0(X_1)}
\]
that is the dual of the induced morphism $\bZ^{\pi_0(X_1)}\rightarrow \bZ^{\pi_0(X_0)}$ is isomorphic to $\bZ^{\pi_0(X_0)}\simeq \bZ$.
The same is also true for the kernel of
\[
\bZ^{\pi_0(\overline{X}_0)}\rightarrow \bZ^{\pi_0(\overline{X}_1).}
\]
Apply Proposition \ref{1.39} to the distinguished triangle
\[
\tau_{\geq 1}F\rightarrow \tau_{\geq 0}F\rightarrow h_0F\rightarrow \tau_{\geq 1}F[1]
\]
to obtain a distinguished triangle
\[
\bZ(1)[2]
\rightarrow
\tau_{\geq 0}F
\rightarrow
\Pic_{\overline{X}_\bullet/k}
\rightarrow
\bZ(1)[3].
\]
There is a commutative diagram of solid arrows
\[
\begin{tikzcd}[column sep=small, row sep=small]
0\arrow[d]\arrow[r]&
\bZ(1)[2]\arrow[r,"\id"]\arrow[d]&
\bZ(1)[2]\arrow[d]\arrow[r]&
0\arrow[d]
\\
{[\bZ^{d_0}\rightarrow N]}\arrow[d,"\id"']\arrow[r]&
\tau_{\geq 0}F\arrow[d]\arrow[r]&
\underline{\rm Hom}_\DMeffk(M(X),\bZ(1)[2])\arrow[d]\arrow[r]&
{[\bZ^{d_0}\rightarrow N][1]}\arrow[d,"\id"]
\\
{[\bZ^{d_0}\rightarrow N]}\arrow[r,dashrightarrow]\arrow[d]&
\Pic_{\overline{X}_\bullet/k}\arrow[d]\arrow[r,dashrightarrow]&
\uHom_\DMeffk(M_{\geq 1}(X),\bZ(1)[2])\arrow[d]\arrow[r,dashrightarrow]&
{[\bZ^{d_0}\rightarrow N]}[1]\arrow[d]
\\
0\arrow[r]&
\bZ(1)[3]\arrow[r,"\id"]&
\bZ(1)[3]\arrow[r]&
0,
\end{tikzcd}
\]
where the second row is obtained by \eqref{1.10.3} and \eqref{1.10.4}.
By the octahedral axiom formulated as in \cite[Proposition 1.4.6]{zbMATH01573275}, this can be completed into a commutative diagram containing all the dotted arrows such that every column and row is a distinguished triangle.
Let $B$ be a cocone of the composite morphism
\[
\uHom_\DMeffk(M_{\geq 1}(X),\bZ(1)[2])
\to
[\bZ^{d_0}\to N][1]
\to
\bZ^{d_0}[1].
\]
Then there is an isomorphism
\begin{equation}
\label{1.10.5}
\uHom_\DMeffk(M_{\geq 1}(X),\bZ(1)[2])[-1]
\simeq
[\bZ^{d_0}\to B].
\end{equation}
There is a commutative diagram of solid arrows
\[
\begin{tikzcd}[column sep=tiny,row sep=small]
{[\bZ^{d_0}\rightarrow N][-1]}\arrow[r]\arrow[d]&
{\rm Pic}_{\overline{X}_\bullet/k}[-1]\arrow[r]\arrow[d]&
\underline{\rm Hom}_\DMeffk(M_{\geq 1}(X),\bZ(1)[1])\arrow[r]\arrow[d]&
{[\bZ^{d_0}\rightarrow N]}\arrow[d]
\\
\bZ^{d_0}[-1]\arrow[r]\arrow[d]&
0\arrow[r]\arrow[d]&
\bZ^{d_0}\arrow[r,"{\rm id}"]\arrow[d]&
\bZ^{d_0}\arrow[d]
\\
N[-1]\arrow[r,dashrightarrow]\arrow[d]&
{\rm Pic}_{\overline{X}_\bullet/k}\arrow[r,dashrightarrow]\arrow[d,"{\rm id}"]&
B\arrow[r,dashrightarrow]\arrow[d]&
N\arrow[d]
\\
{[\bZ^{d_0}\rightarrow N]}\arrow[r]&
{\rm Pic}_{\overline{X}_\bullet/k}\arrow[r]&
\underline{\rm Hom}_\DMeffk(M_{\geq 1}(X),\bZ(1)[2])\arrow[r]&
{[\bZ^{d_0}\rightarrow N][1]}.
\end{tikzcd}
\]
By the octahedral axiom again, this can be completed into a commutative diagram containing all the dotted arrows such that every column and row is a distinguished triangle.

Due to \eqref{1.10.5}, it remains to show that $B$ is a semi-abelian group scheme.
From the third row, we have an exact sequence
\[
0\rightarrow \Pic_{\overline{X}_\bullet/k}\rightarrow B\rightarrow N\rightarrow 0.
\]
Due to \eqref{1.28.1}, $h_i(L\pi_0(N))=0$ for every integer $i\neq 0$.
It follows that the second row in the commutative diagram
\[
\begin{tikzcd}
0\arrow[r]&
\Pic_{\overline{X}_\bullet/k}\arrow[d]\arrow[r]&
B\arrow[d]\arrow[r]&
N\arrow[r]\arrow[d,"\sim"]&
0
\\
0\arrow[r]&
\pi_0(\Pic_{\overline{X}_\bullet/k})\arrow[r]&
\pi_0(B)\arrow[r]&
\pi_0(N)\arrow[r]&
0
\end{tikzcd}
\]
is exact too.
Thus $\pi_0(B)$ is finitely generated.
Use the snake lemma to show
\[
\ker(B\rightarrow \pi_0(B))\simeq \ker(\Pic_{\overline{X}_\bullet/k}\rightarrow \pi_0(\Pic_{\overline{X}_\bullet/k})),
\]
which is a semi-abelian variety by Proposition \ref{1.38}.
\end{proof}

\begin{prop}\label{1.42}
For every $X\in Sm/k$, there is an isomorphism
\[
M_1^*(X)[-1]
\simeq
[N\rightarrow G]
\]
for some lattice $N$ and semi-abelian variety $G$.
\end{prop}
\begin{proof}
Proposition \ref{1.10} gives an isomorphism
\[
\underline{\rm Hom}_\DMeffk(M_{\geq 1}(X),\bZ(1)[2]))[-1]
\simeq
[L\rightarrow B]
\]
for some lattice $L$ and homotopy invariant Nisnevich sheaf with transfers $B$ such that $\ker(B\to \pi_0(B))$ is a semi-abelian variety.
Moreover, Proposition \ref{1.20} gives an isomorphism
\[
h_0(\underline{\rm Hom}_\DMeffk(M_{\geq 1}(X),\bZ(1)[2]))\simeq {\rm Pic}_{X/k}.
\]
It follows that there is an exact sequence
\[
L\rightarrow B\rightarrow {\rm Pic}_{X/k}\rightarrow 0.
\]
The functor $\pi_0$ is right exact since $\pi_0$ is left adjoint to the inclusion functor. Thus by Proposition \ref{1.21}, there is an exact sequence
\begin{equation}
\label{1.17.1}
L\rightarrow \pi_0(B)\rightarrow {\rm NS}(X)\rightarrow 0.
\end{equation}
By \eqref{1.15.1}, in $\DMeffk$ $M_1^*(X)[-1]$ isomorphic to the total complex of the double complex
\[
\begin{tikzcd}
L\arrow[d]\arrow[r]&B\arrow[d]\\
0\arrow[r]&{\rm NS}(X),
\end{tikzcd}\]
which is quasi-isomorphic to the complex
\begin{equation}
\label{1.17.2}
[L\rightarrow {\rm ker}(B\rightarrow {\rm NS}(X))].
\end{equation}
Due to the exactness of \eqref{1.17.1}, there is a commutative diagram with exact rows
\[
\begin{tikzcd}[column sep=tiny]
0\ar[r]&
\ker(L\to \pi_0(B))\ar[d]\ar[r]&
L\ar[r]\ar[d]&
\mathrm{im}(L\to \pi_0(B))\ar[d,"\sim"]\ar[r]&
0
\\
0\ar[r]&
\ker(B\to \pi_0(B))\ar[r]&
\ker(B\to \NS(X))\ar[r]&
\ker(\pi_0(B)\to \NS(X))\ar[r]&
0.
\end{tikzcd}
\]
This shows that \eqref{1.17.2} is quasi-isomorphic to the complex
\[
[{\rm ker}(L\rightarrow \pi_0(B))\rightarrow {\rm ker}(B\rightarrow \pi_0(B))].
\]
Since $N:={\rm ker}(L\rightarrow \pi_0(B))$ is a subsheaf of $L$, $N$ is a lattice by Lemma \ref{1.40}.
Moreover, $G:={\rm ker}(B\rightarrow \pi_0(B))$ is a semi-abelian variety.
\end{proof}

\begin{thm}\label{1.17}
For every $X\in Sm/k$, there is an isomorphism
\[
M_1^*(X)[-1]
\simeq
[N\rightarrow A]
\]
for some lattice $N$ and abelian variety $A$.
\end{thm}
\begin{proof}
By the cancellation theorem \cite{MR2804268} and Proposition \ref{1.24}, we have the vanishing
\begin{equation}
\label{1.17.3}
\begin{split}
& \uHom_\DMeffk(\bG_m,\uHom_\DMeffk(M_{\geq 1}(X),\bZ(1)[2]))
\\
\simeq &
\uHom_\DMeffk(M_{\geq 1}(X),\bZ[1])
=
0.
\end{split}
\end{equation}
By Proposition \ref{1.24} again, we have the vanishing
\begin{equation}
\label{1.17.4}
\uHom_\DMeffk(\bG_m,\NS(X))=0.
\end{equation}
Apply \eqref{1.17.3} and \eqref{1.17.4} to \eqref{1.15.1} to deduce the vanishing
\begin{equation}
\label{1.17.5}
\uHom_\DMeffk(\bG_m,M_1^*(X))=0.
\end{equation}
Due to Proposition \ref{1.42}, there is a distinguished triangle
\begin{equation}
\label{1.17.6}
G\to M_1^*(X)\to N[1]\to G[1]
\end{equation}
for some lattice $N$ and semi-abelian variety $G$.
By Proposition \ref{1.24} again, we have the vanishing
\begin{equation}
\label{1.17.7}
\uHom_\DMeffk(\bG_m,N)=0.
\end{equation}
Apply \eqref{1.17.5} and \eqref{1.17.7} to \eqref{1.17.6} to deduce the vanishing
\begin{equation}
\label{1.17.8}
\uHom_\DMeffk(\bG_m,G)=0.
\end{equation}
There is an exact sequence
\begin{equation}
0\to L\otimes \bG_m\to G\to A\to 0
\end{equation}
for some lattice $L$ and abelian variety $A$.
Together with \eqref{2.13.1} and \eqref{1.17.8}, we have the vanishing
\[
\uHom_\DMeffk(\bG_m,L\otimes \bG_m)=0.
\]
By the cancellation theorem \cite{MR2804268}, we deduce $L=0$.
Hence $G$ is isomorphic to $A$.
\end{proof}

\section{Homomorphisms and extensions of semi-abelian varieties}

In this section, we collect several results about ${\rm Hom}_\DMeffk(A,B[i])$, where $A$ and $B$ are semi-abelian varieties, and $i$ is an integer.

\begin{prop}
\label{2.18}
Let $G$ be a semi-abelian variety.
Then $G$ is a homotopy invariant Nisnevich sheaf with transfers.
\end{prop}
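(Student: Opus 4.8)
The structure of $G$ as a Nisnevich sheaf with transfers is already provided by convention \ref{0.4}(5) and \cite{MR2102056}, so the only content to establish is homotopy invariance: for every $X$ in $Sm/k$, the projection $p:X\times {\bf A}^1\rightarrow X$ should induce a bijection $p^*:G(X)\rightarrow G(X\times {\bf A}^1)$. Since $G(X)$ decomposes over the connected components of $X$, I would reduce to the case when $X$ is integral. Injectivity of $p^*$ is automatic because the zero section $i_0:X\rightarrow X\times {\bf A}^1$ retracts $p$, giving $i_0^*p^*={\rm id}$. Thus the whole problem is the surjectivity of $p^*$, that is, the assertion that every morphism $\phi:X\times {\bf A}^1\rightarrow G$ is constant in the ${\bf A}^1$-direction.

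The plan is to exploit the structure sequence $0\rightarrow T\rightarrow G\stackrel{\pi}\rightarrow A\rightarrow 0$, where $T$ is a torus and $A$ is an abelian variety, together with homotopy invariance of the two ends. Since $k$ is algebraically closed, $T\cong {\bf G}_m^n$, and ${\bf G}_m$ is homotopy invariant because the global units of $X\times {\bf A}^1$ coincide with those of $X$ (units of $R[t]$ for a reduced ring $R$ are units of $R$, and this glues over an affine cover of an integral $X$). The abelian variety $A$ is homotopy invariant by rigidity: a morphism from ${\bf A}^1$ to an abelian variety is constant, so $i_0^*:A(X\times {\bf A}^1)\rightarrow A(X)$ inverts $p^*$. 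Given $\phi:X\times {\bf A}^1\rightarrow G$, I set $\phi_0:=i_0^*\phi\in G(X)$ and form $\psi:=\phi\cdot p^*(\phi_0)^{-1}\in G(X\times {\bf A}^1)$, so that $i_0^*\psi$ is the identity section. Homotopy invariance of $A$ then forces $\pi\circ\psi=p^*i_0^*(\pi\circ\psi)=0$, whence $\psi$ factors through the scheme-theoretic kernel $T$ (using that $X\times {\bf A}^1$ is reduced and $T$ is a smooth closed subscheme). Homotopy invariance of $T$ next gives $\psi=p^*i_0^*\psi=0$, so $\phi=p^*\phi_0$, proving surjectivity.

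The point that requires care, and the reason I avoid a direct five-lemma argument, is that the structure sequence is \emph{not} right exact as a sequence of Nisnevich sheaves, since $\pi$ need not be a Nisnevich epimorphism (compare \ref{0.15}). Consequently one cannot lift an arbitrary element of $A(X)$ to $G(X)$, and surjectivity of $p^*$ on $G$ cannot be deduced formally from surjectivity on $A$. The subtraction trick above circumvents this: it uses only that $p^*$ is an isomorphism on $T$ and on $A$ separately, applied to the concrete section $\psi$ whose restriction along $i_0$ is trivial, so that no genuine lifting against $\pi$ is ever needed.
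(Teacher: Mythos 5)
Your core argument is correct and follows the same route as the paper: the paper obtains the transfer structure from \cite[Lemme 3.2.1]{MR2102056}, quotes \cite[Lemme 3.3.1]{MR2102056} for homotopy invariance of abelian varieties and of ${\bf G}_m$, and then concludes ``by the five lemma''; your subtraction trick $\psi=\phi-p^*i_0^*\phi$ is precisely the diagram chase that makes that five lemma legitimate, since the retraction $i_0^*$ produces the lift of $\pi\circ\phi$ to $G(X)$ that a naive chase would otherwise demand. So the only real difference is that you spell out the gluing step (and sketch proofs of the two cited homotopy-invariance facts) rather than citing them.

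One assertion in your final paragraph is false, however, and should be removed: $\pi\colon G\rightarrow A$ \emph{is} an epimorphism of Nisnevich, indeed Zariski, sheaves. Here $G$ is a torsor over $A$ under the split torus $T=L\otimes{\bf G}_m$, so by Hilbert's Theorem 90 it is Zariski-locally trivial; the paper uses exactly this observation in the proof of Proposition \ref{2.2} to conclude that $0\rightarrow{\bf G}_m\rightarrow G\rightarrow A\rightarrow 0$ is exact as Nisnevich sheaves with transfers. The phenomenon recalled in \ref{0.15} concerns isogenies of abelian varieties, not torus torsors. The legitimate worry behind your remark is different: a sheaf epimorphism need not be surjective on sections over a fixed $X$ (the obstruction to lifting a section of $A$ lies in $H_{Nis}^1(X,T)\cong{\rm Pic}(X)\otimes L$), so the three-term sequences of sections do not by themselves feed the five lemma. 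Your retraction trick resolves this; alternatively one can use the long exact Nisnevich cohomology sequences together with homotopy invariance of ${\rm Pic}$ on regular schemes. None of this affects the validity of your proof, only your stated reason for avoiding the five lemma.
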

\begin{proof}
By \cite[Lemme 3.2.1]{MR2102056}, $G$ has a transfer structure.
Hence it remains to show that $G$ is homotopy invariant. 
By \cite[Lemme 3.3.1]{MR2102056}, this is proven when $G$ is an abelian variety or $\bG_m$.
Then use the five lemma.
\end{proof}
\begin{prop}\label{2.3}
Let $A$ and $B$ be semi-abelian varieties.
Then there is a canonical isomorphism
\[
\Hom_\DMeffk(A,B)
\simeq
\Hom_{sAV/k}(A,B),
\]
where $sAV/k$ denotes the category of semi-abelian varieties whose morphisms are homomorphisms of semi-abelian varieties.
\end{prop}
\begin{proof}
By Proposition \ref{2.18}, there is an isomorphism
\[
{\rm Hom}_\DMeffk(A,B)\simeq {\rm Hom}_{{\rm Sh}^{tr}(k)}(A,B).
\]
Let $f\colon A\rightarrow B$ a morphism of Nisnevich sheaves with transfers.
We only need to show that $f$ is a homomorphism of semi-abelian varieties.
Since $f$ is a morphism of sheaves of abelian groups, the diagram of sets 
\[\begin{tikzcd}
A(T)\times A(T)\arrow[d]\arrow[r]&A(T)\arrow[d]\\
B(T)\times B(T)\arrow[r]&B(T)
\end{tikzcd}\]
commutes for every $T\in Sm/k$, where the horizontal arrows are the multiplication maps. This means that the diagram of schemes
\[\begin{tikzcd}
A\times A\arrow[d]\arrow[r]&A\arrow[d]\\
B\times B\arrow[r]&B
\end{tikzcd}\]
commutes, where the horizontal arrows are the multiplication morphisms. Thus $f$ preserves the multiplication structure. Similarly, $f$ preserves the identity. Thus $f$ is a homomorphism of semi-abelian varieties.
\end{proof}
\begin{prop}
\label{2.2}
Let $A$ be an abelian variety.
Then there is an isomorphism
\[{\rm Ext}_{sAV/k}^1(A,\bG_m)
\simeq
{\rm Hom}_\DMeffk(A,\bZ(1)[2]).\]
\end{prop}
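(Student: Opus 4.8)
The plan is to convert the right-hand side into an $\mathrm{Ext}^1$-group of Nisnevich sheaves with transfers and then to match it, term by term, with the group of extensions of semi-abelian varieties. First I would use the identification ${\bf Z}(1)[2]\cong {\bf G}_m[1]$ in $\dmeffk$ to write ${\rm Hom}_\dmeffk(A,{\bf Z}(1)[2])\cong {\rm Hom}_\dmeffk(A,{\bf G}_m[1])$. By Proposition \ref{2.18}, both $A$ and ${\bf G}_m$ are homotopy invariant Nisnevich sheaves with transfers, hence ${\bf A}^1$-local when viewed as complexes concentrated in degree $0$. Since the target ${\bf G}_m[1]$ is ${\bf A}^1$-local, the adjunction between $D({\rm Sh}^{tr}(k))$ and $\dmeffk$ gives
\[{\rm Hom}_\dmeffk(A,{\bf G}_m[1])\cong {\rm Hom}_{D({\rm Sh}^{tr}(k))}(A,{\bf G}_m[1])\cong {\rm Ext}^1_{{\rm Sh}^{tr}(k)}(A,{\bf G}_m),\]
the last isomorphism holding because ${\rm Sh}^{tr}(k)$ is a Grothendieck abelian category, so morphisms in its derived category compute Yoneda extensions.

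Next I would construct a comparison homomorphism $\Phi\colon {\rm Ext}^1_{sAV/k}(A,{\bf G}_m)\rightarrow {\rm Ext}^1_{{\rm Sh}^{tr}(k)}(A,{\bf G}_m)$. Given an extension $0\rightarrow {\bf G}_m\rightarrow E\rightarrow A\rightarrow 0$ of semi-abelian varieties, the quotient morphism $E\rightarrow A$ is a ${\bf G}_m$-torsor which is Zariski-locally trivial, so the sequence is exact as Nisnevich sheaves; by Propositions \ref{2.18} and \ref{1.36} the transfer structures are compatible, and the resulting Yoneda class defines $\Phi$. Under the identification of the first paragraph, $\Phi$ agrees with the map sending an $sAV$-extension to the connecting morphism $A\rightarrow {\bf G}_m[1]$ of the associated distinguished triangle in $\dmeffk$, so it suffices to prove that $\Phi$ is an isomorphism. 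Injectivity is immediate from Proposition \ref{2.3}: a sheaf-theoretic splitting $A\rightarrow E$ of the extension is a morphism of Nisnevich sheaves with transfers between semi-abelian varieties, hence a homomorphism of semi-abelian varieties, and therefore already splits the extension in $sAV/k$.

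The main obstacle is surjectivity, that is, showing that every extension $0\rightarrow {\bf G}_m\rightarrow F\rightarrow A\rightarrow 0$ of Nisnevich sheaves with transfers is represented by a semi-abelian variety. Here I would regard the epimorphism $F\rightarrow A$ as a ${\bf G}_m$-torsor over $A$; by the Nisnevich form of Hilbert's Theorem 90 recalled in \ref{1.9}, such torsors coincide with line bundles, so $F$ determines a class in ${\rm Pic}(A)$, and compatibility of $F$ with the group laws forces this line bundle to be primitive, hence to lie in ${\rm Pic}^0(A)(k)=A^\vee(k)$. The Barsotti--Weil theorem then produces a semi-abelian variety $E$ realizing this class as an extension of $A$ by ${\bf G}_m$, and the remaining point is to verify that the Nisnevich sheaf with transfers underlying $E$ recovers $F$ itself and not merely its \'etale sheafification. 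This is precisely the Nisnevich-versus-\'etale subtlety emphasized in \ref{0.15}, and I expect to resolve it using Hilbert's Theorem 90 a second time: since Nisnevich and \'etale ${\bf G}_m$-torsors agree, the change-of-topology map on $\mathrm{Ext}^1$ is injective on the relevant classes, forcing the two Nisnevich extensions to coincide. This shows $\Phi$ is surjective and completes the comparison.
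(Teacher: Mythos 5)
Your reduction of the right-hand side to ${\rm Ext}^1_{{\rm Sh}^{tr}(k)}(A,{\bf G}_m)$ via ${\bf A}^1$-locality of ${\bf G}_m$, your construction of $\Phi$ (Zariski-local triviality of the torsor, so the $sAV$-extension is exact as Nisnevich sheaves, with transfers matched by Proposition \ref{1.36}), and your injectivity argument via Proposition \ref{2.3} all coincide with the paper's proof. Where you genuinely diverge is surjectivity: the paper disposes of it in one stroke by invoking a representability theorem for sheaf extensions (\cite[Proposition 17.4]{MR0213365}), which says that any extension of Nisnevich sheaves $0\rightarrow{\bf G}_m\rightarrow F\rightarrow A\rightarrow 0$ is already representable by a group scheme, and then transports transfers by Proposition \ref{1.36}. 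You instead try to rebuild the semi-abelian variety by hand: torsor class in ${\rm Pic}(A)$, primitivity, Barsotti--Weil. That route is viable, but as written it has a gap at its final step.

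The gap is this: once Barsotti--Weil hands you an algebraic-group extension $E$ whose torsor class in ${\rm Pic}(A)$ equals that of $F$, you must show that two extensions of Nisnevich sheaves of $A$ by ${\bf G}_m$ with equal torsor classes are isomorphic \emph{as extensions}; equivalently, that the torsor-class homomorphism ${\rm Ext}^1_{{\rm Sh}_{Nis}}(A,{\bf G}_m)\rightarrow H^1_{Nis}(A,{\bf G}_m)={\rm Pic}(A)$ is injective. Your appeal to ``the change-of-topology map on ${\rm Ext}^1$ being injective'' does not do this. First, no change of topology is in play: $F$ and $E$ are both Nisnevich extensions with the same torsor class, and the issue is the difference between ${\rm Ext}^1$ of abelian sheaves and $H^1$ of torsors (an extension can have trivial underlying torsor without obviously splitting as a group extension). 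Second, injectivity of ${\rm Ext}^1_{{\rm Sh}_{Nis}}(A,{\bf G}_m)\rightarrow{\rm Ext}^1_{{\rm Sh}_{\et}}(A,{\bf G}_m)$ is exactly the kind of statement the paper warns against assuming (\ref{0.14}, \ref{0.15}): an \'etale splitting is a morphism of \'etale sheaves and need not descend to a morphism of Nisnevich sheaves, so this claim would itself require proof. The missing lemma is elementary but must be supplied: if the torsor underlying an extension $0\rightarrow{\bf G}_m\rightarrow F\rightarrow A\rightarrow 0$ of Nisnevich sheaves is trivial, pick a torsor section $s:A\rightarrow F$; the defect $f(a,b)=s(a)+s(b)-s(a+b)$ is a morphism of sheaves $A\times A\rightarrow {\bf G}_m$, hence by Yoneda an element of ${\bf G}_m(A\times A)=k^{*}$ since $A\times A$ is proper and connected over the algebraically closed field $k$; translating $s$ by this constant yields an additive section, so the extension splits. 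With that lemma (applied to the Baer difference of $F$ and $E$), plus one more application of Proposition \ref{1.36} to upgrade the resulting isomorphism to sheaves \emph{with transfers} (which you also leave implicit at this step), your Barsotti--Weil route closes up and gives an alternative to the paper's citation.
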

\begin{proof}
An element of ${\rm Ext}_{sAV/k}^1(A,\bG_m)$ is given by an exact sequence of semi-abelian varieties
\begin{equation}\label{2.2.1}
0\rightarrow \bG_m\rightarrow G\stackrel{p}\rightarrow A\rightarrow 0
\end{equation}
modulo an equivalence relation.
Note that $G$ is a Nisnevich sheaf with transfers too.

Since $G$ is a $\bG_m$-torsor on $A$, it comes from an element of $H_{fppf}^1(A,\bG_m)$. By Hilbert's Theorem 90, there is an isomorphism
\[
H_{fppf}^1(A,\bG_m)\simeq {\rm Pic}(A).
\]
From the description of this isomorphism, we deduce that there is a line bundle $V$ over $A$ such that $G\simeq V-V_0$, where $V_0$ is the zero section of $V$.
Since $V$ is Zariski locally a trivial line bundle, $G$ is Zariski locally a trivial $\bG_m$-torsor.
This means that $p$ is surjective in the category of Zariski sheaves. In particular, the sequence \eqref{2.2.1} is exact in the category of Nisnevich sheaves with transfers. Thus we can view ${\rm Ext}_{sAV/k}^1(A,\bG_m)$ as a subset of ${\rm Ext}_{{\rm Sh}^{tr}(k)}^1(A,\bG_m)$.

Suppose that
\[
0\rightarrow \bG_m\rightarrow F\rightarrow A\rightarrow 0
\]
is an exact sequence of Nisnevich sheaves with transfers. Then $F$ is representable by \cite[Proposition 17.4]{MR0213365}, so $F$ is isomorphic to a semi-abelian variety.
This establishes an isomorphism
\[
{\rm Ext}_{sAV/k}^1(A,\bG_m)
\simeq
{\rm Ext}_{{\rm Sh}^{tr}(k)}^1(A,\bG_m).
\]

Since $\bG_m$ is ${\bf A}^1$-local, there are isomorphisms
  \[\begin{split}&{\rm Hom}_\DMeffk(A,\bZ(1)[2])\simeq {\rm Hom}_\DMeffk(A,\bG_m[1])\\
  \simeq& {\rm Hom}_{{\rm D}({\rm Sh}^{tr}(k))}(A,\bG_m[1])\simeq  {\rm Ext}_{{\rm Sh}^{tr}(k)}^1(A,\bG_m).\end{split}\]
Combine the above equations to conclude.
\end{proof}

\begin{prop}\label{2.4}
Let $A$ be an abelian variety.
Then for every integer $i>0$, we have the vanishing
\[
{\rm Hom}_\DMeffk(A[i],\bZ(1)[2])=0.
\]
\end{prop}
\begin{proof}
If $i>1$, then $A[i-2]$ is $t$-positive for the $0$-motivic structure, and $\bZ(1)[1]\simeq \bG_m$ is $t$-negative for the $0$-motivic structure. Thus we are done in this case.

If $i=1$, then there is an isomorphism
\[
{\rm Hom}_\DMeffk(A[1],\bZ(1)[2])\simeq{\rm Hom}_{sAV/k}(A,\bG_m)
\]
by Proposition \ref{2.3}.
There are isomorphisms
\[
{\rm Hom}_{Sch/k}(A,\bG_m)\simeq H_{Nis}^0(A,\bG_m)\simeq k^*.
\]
Thus every morphism $f\colon A\rightarrow \bG_m$ of schemes is a constant morphism.
If $f$ is a homomorphism of semi-abelian varieties, $f$ should be the zero morphism.
This shows the vanishing ${\rm Hom}_{sAV/k}(A,\bG_m)=0$.
\end{proof}
\begin{lemma}\label{2.23}
Let $A$ be an abelian variety. Then
\[
\underline{\rm Hom}_{\DMeffk}(A,\bZ(1)[2])
\]
is $t$-negative for the $0$-motivic $t$-structure.
\end{lemma}
\begin{proof}
We need to show the vanishing
\[
{\rm Hom}_{\DMeffk}(M(X)[i],\underline{\rm Hom}_{\DMeffk}(A,\bZ(1)[2]))=0
\]
for every integral $X\in Sm/k$ and integer $i>0$.
By using the distinguished triangle
\[
M_{\geq 1}(X)\rightarrow M(X)\rightarrow \bZ\rightarrow M_{\geq 1}(X)[1]
\]
we only need to show the vanishings
\begin{equation}
\label{1.3.2}
{\rm Hom}_{\DMeffk}(\bZ[i],\underline{\rm Hom}_{\DMeffk}(A,\bZ(1)[2]))=0
\end{equation}
and
\begin{equation}
\label{1.3.1}
{\rm Hom}_{\DMeffk}(M_{\geq 1}(X)[i],\underline{\rm Hom}_{\DMeffk}(A,\bZ(1)[2]))=0
\end{equation}
for every integer $i>0$.
Proposition \ref{2.4} gives \eqref{1.3.2}, so it remains to show \eqref{1.3.1}.

By adjunction, there is an isomorphism
\[
\begin{split}
&{\rm Hom}_{\DMeffk}(M_{\geq 1}(X)[i],\underline{\rm Hom}_{\DMeffk}(A,\bZ(1)[2]))\\
\simeq& {\rm Hom}_{\DMeffk}(A[i],\underline{\rm Hom}_{\DMeffk}(M_{\geq 1}(X),\bZ(1)[2])).\end{split}
\]
There is a distinguished triangle
\[
M_1^*(X)\rightarrow \underline{\rm Hom}_{\DMeffk}(M_{\geq 1}(X),\bZ(1)[2])\rightarrow {\rm NS}(X)\rightarrow M_1^*(X)[1].
\]
Since ${\rm NS}(X)$ is constant, we have the vanishing
\[
{\rm Hom}_\DMeffk(A[i],{\rm NS}(X))=0
\]
for every integer $i> 0$ by Proposition \ref{1.33}.
Hence it suffices to show the vanishing
\[{\rm Hom}_\DMeffk(A[i],M_1^*(X))=0\]
for every integer $i>0$.

Due to Theorem \ref{1.17}, there exists a distinguished triangle
\[
L\to B\to M_1^*(X)\to L[1]
\]
for some lattice $L$ and semi-abelian variety $B$.
Hence it suffices to show the vanishing
\[{\rm Hom}_\DMeffk(A[i],\bZ[1])=0\text{ and }{\rm Hom}_\DMeffk(A[i],B)=0\]
for every integer $i>0$.
By Proposition \ref{1.33}, the first one holds.
The second one holds since $A$ and $B$ are in the heart of the $0$-motivic $t$-structure.
\end{proof}

\section{Motivic duals of abelian varieties}
The purposes of this section is to show that there is an isomorphism
\[
\tau_{\geq 0}({\underline{\rm Hom}}_{\DMeffk}(A,\bZ(1)[2]))\simeq A^\vee
\]
for every abelian variety $A$, where $A^\vee$ denotes the dual abelian variety of $A$.
By Lemma \ref{2.23}, this is equivalent to showing that there is an isomorphism
\[
h_0(\underline{\rm Hom}_\DMeffk(A,\bZ(1)[2]))
\xrightarrow{\sim}
A^\vee.
\]

\begin{none}
\label{2.19}
For every $X\in Sm/k$, there is a semi-abelian group scheme $\Alb(X)$ together with a morphism
\[
\Alb\colon X\rightarrow \Alb(X)
\]
that is universal among all morphisms from $X$ to semi-abelian group schemes, see \cite{SCC_1958-1959__4__A11_0} and \cite{RamachandranDuality}.
The semi-abelian group scheme $\Alb(X)$ is called the \emph{Albanese scheme} of $X$, and the morphism $\Alb$ is called the \emph{Albanese morphism}.
There is an induced morphism
\[
\bZ^{tr}(X)\rightarrow \Alb(X)
\]
of Nisnevich sheaves with transfers.
Due to \cite[Eq.\ 4]{RamachandranDuality}, there is a canonical isomorphism
\begin{equation}
\label{2.19.1}
\pi_0(\Alb(X))\simeq \bZ^{\pi_0(X)}.
\end{equation}

Let $\Alb^0(X)$ be the connected component of the identity of $\Alb(X)$.
Thus $\Alb^0(X)$ is a semi-abelian variety, which is called the \emph{Albanese variety} of $X$.
There is an exact sequence
\[
0\to \Alb^0(X)\to \Alb(X)\to \bZ^{\pi_0(X)}\to 0.
\]
There is also an induced morphism
\[
\Alb\colon M_{\geq 1}(X)\rightarrow \Alb^0(X),
\]
which is called the \emph{Albanese morphism} again.
\end{none}
\begin{none}\label{1.4}
Let $A$ be an abelian variety.
We will construct a commutative diagram
\begin{equation}
\label{1.4.1}
\begin{tikzcd}
&
\underline{\rm Hom}_{\DMeffk}(A,\bZ(1)[2])\ar[ld,dashrightarrow,"\eta"']\ar[rd,"0"]\ar[d,"q"]
\\
A^\vee\ar[r]&
\underline{\rm Hom}_{\DMeffk}(M_{\geq 1}(A),\bZ(1)[2])\ar[r,"p"]&
{\rm NS}(A)\ar[r]&
 A^{\vee}[1]
\end{tikzcd}
\end{equation}
as follows.
Apply Proposition \ref{1.25} to $A$ to obtain the lower row of \eqref{1.4.1}, which is a distinguished triangle.
The Albanese morphism $M_{\geq 1}(A)\to A$ naturally induces $q$.

Let us show $pq=0$.
It suffices to show that the homomorphism
\begin{equation}
\label{1.4.2}
{\rm Hom}_{\DMeffk}(M(X)[i],\underline{\rm Hom}_{\DMeffk}(A,\bZ(1)[2]))\rightarrow {\rm Hom}_{\DMeffk}(M(X)[i],{\rm NS}(A))
\end{equation}
is $0$ for every $X\in Sm/k$ and integer $i$.
We only need to consider the case when $X={\rm Spec}\,k$ and $i=0$ since ${\rm NS}(A)$ is a constant sheaf.
In this case, owing to Proposition \ref{2.2} the homomorphism \eqref{1.4.2} becomes a homomorphism
\[
{\rm Ext}_{sAV/k}^1(A,\bG_m)
\to
{\rm NS}(A).
\]
This is zero due to \cite[Proposition 17.6]{MR0213365}.
Thus in \eqref{1.4.1}, we can construct $\eta$.
\end{none}

\begin{thm}\label{2.21}
Let $A$ be an abelian variety.
Then the morphism $\eta$ in \eqref{1.4.1} naturally induces an isomorphism
\begin{equation}
\label{2.21.1}
\varphi
\colon
h_0(\underline{\rm Hom}_\DMeffk(A,\bZ(1)[2]))
\xrightarrow{\sim}
A^\vee.
\end{equation}
\end{thm}
\begin{proof}
The left one in \eqref{2.21.1} is the sheaf associated with the presheaf
\[
X\in Sm/k \mapsto {\rm Hom}_\DMeffk(M(X),\underline{\rm Hom}_{\DMeffk}(A,\bZ(1)[2])).
\]
Hence it suffices to show that there is a canonical isomorphism
\[
{\rm Hom}_\DMeffk(M(X),\underline{\rm Hom}_{\DMeffk}(A,\bZ(1)[2]))
\xrightarrow{\sim}
{\rm Hom}_\DMeffk(M(X),A^\vee)
\]
for every $X\in Sm/k$.

Since $M(X)\simeq \bZ^{\pi_0(X)}\oplus M_{\geq 1}(X)$, we reduce to Propositions \ref{1.5} and \ref{2.20} below.
\end{proof}

\begin{prop}\label{1.5}
Let $A$ be an abelian variety.
For every $X\in Sm/k$, the morphism $\eta$ in \eqref{1.4.1} naturally induces an isomorphism
\[
{\rm Hom}_\DMeffk(M_{\geq 1}(X),\underline{\rm Hom}_{\DMeffk}(A,\bZ(1)[2]))\rightarrow {\rm Hom}_\DMeffk(M_{\geq 1}(X),A^\vee).
\]
\end{prop}
\begin{proof}
By Proposition \ref{1.24} for every integer $i$, we have the vanishing
\[
{\rm Hom}_{\DMeffk}(M_{\geq 1}(X)[i],{\rm NS}(A))=0.
\]
Since the lower row of \eqref{1.4.1} is a distinguished triangle we have an isomorphism
\[
\begin{split}
&{\rm Hom}_\DMeffk(M_{\geq 1}(X),\underline{\rm Hom}_\DMeffk (M_{\geq 1}(A),\bZ(1)[2]))\\
\simeq& {\rm Hom}_\DMeffk(M_{\geq 1}(X),A^\vee).
\end{split}
\]
Then it suffices to show that the Albanese morphism $M_{\geq 1}(A)\to A$ naturally induces an isomorphism
\[
\begin{split}
&{\rm Hom}_{\DMeffk}(M_{\geq 1}(X),\underline{\rm Hom}_{\DMeffk}(A,\bZ(1)[2]))
\\
\rightarrow &{\rm Hom}_{\DMeffk}(M_{\geq 1}(X),\underline{\rm Hom}_{\DMeffk}(M_{\geq 1}(A),\bZ(1)[2])).
\end{split}
\]
By adjunction, this becomes a homomorphism
\[
\begin{split}
&{\rm Hom}_{\DMeffk}(A,\underline{\rm Hom}_{\DMeffk}(M_{\geq 1}(X),\bZ(1)[2]))
\\
\rightarrow &{\rm Hom}_{\DMeffk}(M_{\geq 1}(A),\underline{\rm Hom}_{\DMeffk}(M_{\geq 1}(X),\bZ(1)[2])).
\end{split}
\]
Due to Propositions \ref{1.24} and \ref{1.33} for every integer $i\leq 1$, we have the vanishings
\[{
\rm Hom}_{\DMeffk}(M_{\geq 1}(A),{\rm NS}(X)[i])=0 \text{ and }{\rm Hom}_{\DMeffk}(A,{\rm NS}(X)[i])=0.
\]
Hence using the distinguished triangle \eqref{1.15.1}, it suffices to show that the Albanese morphism $M_{\geq 1}(A)\to A$ naturally induces an isomorphism
\[
{\rm Hom}_{\DMeffk}(A,M_1^*(X))\rightarrow {\rm Hom}_{\DMeffk}(M_{\geq 1}(A),M_1^*(X)).
\]
By Theorem \ref{1.17}, there is a distinguished triangle
\[
B\rightarrow M_1^*(X)\rightarrow L[1]\rightarrow B[1]
\]
in $\DMeffk$, where $B$ is an abelian variety, and $L$ is a lattice.
By Propositions \ref{1.24} and \ref{1.33} for every integer $i\leq 1$, we again have the vanishings
\[
{\rm Hom}_{\DMeffk}(M_{\geq 1}(A),L[i])=0
\text{ and }
{\rm Hom}_{\DMeffk}(A,L[i])=0.
\]
Hence it remains to show that the Albanese morphism $M_{\geq 1}(A)\to A$ naturally induces an isomorphism
\[
{\rm Hom}_{\DMeffk}(A,B)\rightarrow {\rm Hom}_{\DMeffk}(M_{\geq 1}(A),B).
\]
This follows from Proposition \ref{2.3} and the universality of the Albanese morphism $M_{\geq 1}(A)\rightarrow A$.
\end{proof}
\begin{prop}\label{2.20}
Let $A$ be an abelian variety.
Then the morphism $\eta$ in \eqref{1.4.1} naturally induces an isomorphism
\[
{\rm Hom}_\DMeffk(A,\bZ(1)[2])\rightarrow {\rm Hom}_\DMeffk(\bZ,A^\vee).
\]
\end{prop}
\begin{proof}
This is equivalent to showing that the naturally induced homomorphism
\[
{\rm Hom}_{\DMeffk}(A,\bZ(1)[2])\rightarrow {\rm Hom}_{\DMeffk}(M_{\geq 1}(A),\bZ(1)[2])\simeq {\rm Pic}(A)
\]
is injective and has image ${\rm Pic}^0(A)$.
Here, the right isomorphism comes from Proposition \ref{1.25}.
This follows from Proposition \ref{2.2} and \cite[Proposition 17.6]{MR0213365}.
\end{proof}

\begin{prop}\label{2.22}
The morphism $\varphi$ in \eqref{2.21.1} is functorial in $A$.
More precisely, for every homomorphism $f\colon A'\to A$ of abelian varieties there is a commutative diagram
  \[\begin{tikzcd}
    h_0(\underline{\rm Hom}_\DMeffk(A,\bZ(1)[2]))\arrow[d,"{h_0(\uHom_\DMeffk(f,\bZ(1)[2]))}"']\arrow[r,"\varphi"]&A^\vee\arrow[d,"f^\vee"]\\
    h_0(\underline{\rm Hom}_\DMeffk(A',\bZ(1)[2]))\arrow[r,"\varphi"]&A'^\vee.
  \end{tikzcd}
\]
\end{prop}
\begin{proof}
There is a commutative diagram
  \begin{equation}\label{2.22.1}\begin{tikzcd}
    M_{\geq 1}(A')\arrow[r,"u"]\arrow[d,"g"']&A'\arrow[d,"f"]\\
    M_{\geq 1}(A)\arrow[r,"u'"]&A
  \end{tikzcd}\end{equation}
  in $\DMeffk$, where $g:=M_{\geq 1}(f)$, and $u$ and $u'$ are the Albanese morphisms.
Lemma \ref{2.23} gives morphisms
\begin{gather*}
h_0(\underline{\rm Hom}_\DMeffk(A,\bZ(1)[2]))\stackrel{v}\rightarrow \underline{\rm Hom}_\DMeffk(A,\bZ(1)[2]),
\\
h_0(\underline{\rm Hom}_\DMeffk(A',\bZ(1)[2]))\stackrel{v'}\rightarrow\underline{\rm Hom}_\DMeffk(A',\bZ(1)[2])
\end{gather*}
in $\DMeffk$.
There is an induced diagram
\[
\begin{tikzcd}[column sep=small, row sep=small]
&&A^\vee\arrow[rd,"i"]\arrow[dd,"f^\vee",near start]
\\
h_0(\underline{\rm Hom}_\DMeffk(A,\bZ(1)[2]))\arrow[r,"v"]\arrow[dd,"h_0(f')"']&\underline{\rm Hom}_\DMeffk(A,\bZ(1)[2])\arrow[dd,"f'"]\arrow[rr,crossing over,"i\eta",near start]\arrow[ru,"\eta"]&&{\rm Pic}_{A/k}\arrow[dd,"g"]
\\
&&A'^\vee\arrow[rd,"i'"]
\\
h_0(\underline{\rm Hom}_\DMeffk(A',\bZ(1)[2]))\arrow[r,"v'"]&\underline{\rm Hom}_\DMeffk(A',\bZ(1)[2])\arrow[ru,"\eta'"]\arrow[rr,"i'\eta'",near start]&&{\rm Pic}_{A'/k}
\end{tikzcd}
\]
in $\DMeffk$, where $i$, $i'$, $\eta$, and $\eta'$ come from Proposition \ref{1.25} and \eqref{1.4.1}. By taking $\underline{\rm Hom}_\DMeffk(-,\bZ(1)[2])$ to \eqref{2.22.1}, we see that the right front square commutes.
We have relations
\[
i'f^\vee \eta v=gi\eta v=i'\eta'f'v=i'\eta'v'h_0(f').
\]
There is a distinguished triangle
\[
A'^\vee \rightarrow {\rm Pic}_{A'/k}\rightarrow {\rm NS}(A')\rightarrow A'^\vee[1]
\]
in $\DMeffk$.
Hence to show $f^\vee \eta v=\eta'v'h_0(f)$, it suffices to show the vanishing
\[
{\rm Hom}_\DMeffk(h_0(\underline{\rm Hom}_\DMeffk(A^\vee,\bZ(1)[2])),{\rm NS}(A')[i])=0
\]
for $i=-1,0$.
By Theorem \ref{2.21}, it suffices to show the vanishing
\[
{\rm Hom}_\DMeffk(A,{\rm NS}(A')[i])=0
\]
for $i=-1,0$. This follows from Proposition \ref{1.33}.
\end{proof}

\section{Motivic duals of semi-abelian varieties}

\begin{none}\label{2.24}
Let $G$ be a semi-abelian variety with an exact sequence
\begin{equation}\label{2.24.1}
0\rightarrow L\otimes \bG_m\rightarrow G\rightarrow A\rightarrow 0
\end{equation}
of group schemes, where $L$ is a lattice, and $A$ is an abelian variety.
Then the Cartier dual of $G$ can be written as $G^\vee:=[L^\vee\rightarrow A^\vee][1]$, where $L^\vee$ denotes the dual lattice of $L$, and $A^\vee$ denotes the dual abelian variety of $A$.
We refer to \cite[Paragraph 10.2.11]{zbMATH03375638} for the definition of the Cartier dual.

There is a commutative diagram
\begin{equation}
\label{2.24.2}
\begin{tikzcd}[row sep=small]
\underline{\rm Hom}_\DMeffk(L^\vee[1],\bZ(1)[2])\arrow[r,"\sim"]\arrow[d]&L\otimes \bG_m\arrow[d]\\
\underline{\rm Hom}_\DMeffk(G^\vee,\bZ(1)[2])\arrow[d]&G\arrow[d]\\
\underline{\rm Hom}_\DMeffk(A^\vee, \bZ(1)[2])\arrow[r,"\eta"]\arrow[d]&A\arrow[d]\\
\underline{\rm Hom}_\DMeffk(L^\vee,\bZ(1)[2])\arrow[r,"\sim"]&L\otimes \bG_m[1]
\end{tikzcd}
\end{equation}
in $\DMeffk$ whose columns are distinguished triangles.
This diagram gives a morphism
\[
\mu\colon \underline{\rm Hom}_\DMeffk(G^\vee,\bZ(1)[2])\rightarrow G
\]
in $\DMeffk$ such that the above diagram still commutes after adding this.
\end{none}

\begin{lemma}\label{2.28}
Let $G$ be a semi-abelian variety. Then $\underline{\rm Hom}_{\DMeffk}(G^\vee,\bZ(1)[2])$ is $t$-negative for the $0$-motivic $t$-structure.
\end{lemma}
\begin{proof}
Since $\underline{\rm Hom}_{\DMeffk}(L^\vee[1],\bZ(1)[2])\simeq L\otimes \bG_m$ is in the heart of the $0$-motivic $t$-structure, we are done by Lemma \ref{2.23}.
\end{proof}

\begin{prop}\label{2.25}
Let $G$ be a semi-abelian variety.
Then there is an isomorphism
\begin{equation}
\label{2.25.2}
\tau
\colon
h_0(\underline{\rm Hom}_\DMeffk(G^\vee,\bZ(1)[2]))\rightarrow G.
\end{equation}
\end{prop}
\begin{proof}
Suppose we have an exact sequence of the form \eqref{2.24.1}.
From \eqref{2.24.2} we have a commutative diagram
\begin{equation}
\label{2.25.1}
\begin{tikzcd}[row sep=small]
  &0\arrow[d]\\
  h_0(\underline{\rm Hom}_\DMeffk(L^\vee[1],\bZ(1)[2]))\arrow[r,"\sim"]\arrow[d,"r"]&L\otimes \bG_m\arrow[d]\\
   h_0( \underline{\rm Hom}_\DMeffk(G^\vee,\bZ(1)[2]))\arrow[d]\arrow[r,"\tau"]&G\arrow[d]\\
   h_0( \underline{\rm Hom}_\DMeffk(A^\vee, \bZ(1)[2]))\arrow[r,"\varphi"]\arrow[d]&A\arrow[d]\\
    0&0
\end{tikzcd}
\end{equation}
of Nisnevich sheaves with transfers.
By Theorem \ref{2.21}, $\varphi$ is an isomorphism.
Since $\tau \circ r$ is injective, $r$ is injective too.
Hence $\tau$ is an isomorphism by the five lemma.
\end{proof}

\begin{prop}\label{2.26}
The morphism $\varphi$ in \eqref{2.25.2} is functorial in $G$.
More precisely, for every homomorphism $f\colon G\to G'$ of semi-abelian varieties there is a commutative diagram
\[\begin{tikzcd}
h_0(\underline{\rm Hom}_\DMeffk(G^\vee,\bZ(1)[2]))\arrow[d,"f'"']\arrow[r,"\tau"]&G\arrow[d,"f"]\\
h_0(\underline{\rm Hom}_\DMeffk(G'^\vee,\bZ(1)[2]))\arrow[r,"\tau'"]&G',
\end{tikzcd}\]
where $f':=h_0(\uHom_\DMeffk(f^\vee,\bZ(1)[2]))$, and $\tau$ and $\tau'$ are obtained by Proposition \ref{2.25}.
\end{prop}
\begin{proof}
There is a commutative diagram
\[\begin{tikzcd}
0\arrow[r]&L\otimes \bG_m\arrow[d,"h"']\arrow[r]&G\arrow[d,"f"]\arrow[r]&A\arrow[d,"g"]\arrow[r]&0\\
0\arrow[r]&L'\otimes \bG_m\arrow[r]&G'\arrow[r]&A'\arrow[r]&0
\end{tikzcd}\]
of group schemes with exact rows, where $L$ and $L'$ are lattices, and $A$ and $A'$ are abelian varieties.
Proposition \ref{2.22} tells that there is a commutative diagram
\[
\begin{tikzcd}
h_0(\underline{\rm Hom}_\DMeffk(A^\vee,\bZ(1)[2]))\arrow[d,"g'"']\arrow[r,"\varphi"]&A\arrow[d,"g"]\\
h_0(\underline{\rm Hom}_\DMeffk(A'^\vee,\bZ(1)[2]))\arrow[r,"\varphi'"]&A'
\end{tikzcd}
\]
of Nisnevich sheaves with transfers.
  
There is a commutative diagram
  \[\begin{tikzcd}[row sep=tiny]
    0\arrow[d]&0\arrow[d]\\
    h_0(\uHom_\DMeffk(L^\vee[1],\bZ(1)[2]))\arrow[d]\arrow[r]&L'\otimes \bG_m\arrow[d]\\
    h_0(\underline{\rm Hom}_\DMeffk(G^\vee,\bZ(1)[2]))\arrow[d]&G'\arrow[d]\\
    h_0(\underline{\rm Hom}_\DMeffk(A^\vee,\bZ(1)[2])) \arrow[r,"g\varphi"]\arrow[d]&A'\arrow[d]\\
    0&0
  \end{tikzcd}\]
of Nisnevich sheaves with transfers.
The right column is exact.
As observed in the proof of Proposition \ref{2.25} the left column is exact too.
The above diagram still commutes after adding $f\tau \colon h_0(\underline{\rm Hom}_\DMeffk(G^\vee,\bZ(1)[2])\rightarrow G'$.
Since $g\varphi=\varphi'g'$, the same holds if we add $\tau'f'$.
Hence there is a commutative diagram
\[
\begin{tikzcd}[row sep=tiny]
0\arrow[d]&0\arrow[d]\\
h_0(\uHom_\DMeffk(L^\vee[1],\bZ(1)[2]))\arrow[d]\arrow[r,"0"]&L'\otimes \bG_m\arrow[d]\\
h_0(\underline{\rm Hom}_\DMeffk(G^\vee,\bZ(1)[2]))\arrow[d]\ar[r,"f\tau-\tau'f'"]&G'\arrow[d]\\
h_0(\underline{\rm Hom}_\DMeffk(A^\vee,\bZ(1)[2])) \arrow[r,"0"]\arrow[d]&A'\arrow[d]\\
0&0.
\end{tikzcd}
\]
This means that $f\tau-\tau f'$ factors through a morphism
\[
u\colon h_0(\underline{\rm Hom}_\DMeffk(A^\vee,\bZ(1)[2])) \to L'\otimes \bG_m.
\]
From Proposition \ref{2.4} and Theorem \ref{2.21} we deduce $u=0$, which implies $f\tau-\tau f'=0$.
\end{proof}
\begin{prop}\label{2.31}
Let $G$ be a semi-abelian variety.
For every $X\in Sm/k$, there is an isomorphism
\[
{\rm Hom}_\DMeffk(M_{\geq 1}(X),\underline{\rm Hom}_{\DMeffk}(G^\vee,\bZ(1)[2]))\rightarrow {\rm Hom}_\DMeffk(M_{\geq 1}(X),G).
\]
\end{prop}
\begin{proof}
There is an exact sequence
\[0\rightarrow L\otimes \bG_m\rightarrow G\rightarrow A\rightarrow 0\]
of group schemes, where $L$ is a lattice, and $A$ is an abelian variety.
From \eqref{2.24.2} we have a commutative diagram
  \[\begin{tikzpicture}[baseline= (a).base]
    \node[scale=.9] (a) at (0,0)
    {\begin{tikzcd}[column sep=tiny, row sep=small]
    {\rm Hom}_\DMeffk(M_{\geq 1}(X),\underline{\rm Hom}_\DMeffk(A^\vee,\bZ(1)[1]))\arrow[r]\arrow[d]&{\rm Hom}_\DMeffk(M_{\geq 1}(X),A[-1])\arrow[d]\\
    {\rm Hom}_\DMeffk(M_{\geq 1}(X),\underline{\rm Hom}_\DMeffk(L^\vee,\bZ(1)[1]))\arrow[r,"\sim"]\arrow[d]&{\rm Hom}_\DMeffk(M_{\geq 1}(X),L\otimes \bG_m[-1])\arrow[d]\\
    {\rm Hom}_\DMeffk(M_{\geq 1}(X),\underline{\rm Hom}_\DMeffk(G^\vee,\bZ(1)[2]))\arrow[r]\arrow[d]&{\rm Hom}_\DMeffk(M_{\geq 1}(X),G)\arrow[d]\\
    {\rm Hom}_\DMeffk(M_{\geq 1}(X),\underline{\rm Hom}_\DMeffk(A^\vee,\bZ(1)[2]))\arrow[d]\arrow[r]&{\rm Hom}_\DMeffk(M_{\geq 1}(X),A)\arrow[d]\\
    {\rm Hom}_\DMeffk(M_{\geq 1}(X),\underline{\rm Hom}_\DMeffk(L^\vee,\bZ(1)[2]))\arrow[r,"\sim"]&{\rm Hom}_\DMeffk(M_{\geq 1}(X),L\otimes \bG_m).
  \end{tikzcd}};
  \end{tikzpicture}\]
By Proposition \ref{1.5}, the fourth horizontal arrow is an isomorphism. Hence by the five lemma, to show that the third horizontal arrow is an isomorphism, it suffices to show that the first horizontal arrow is an isomorphism.
  
Owing to Lemma \ref{2.23}, $\underline{\rm Hom}_\DMeffk(A^\vee,\bZ(1)[2])$ is $t$-negative for the $0$-motivic $t$-structure. Since $M_{\geq 1}(X)$ is $t$-positive for the $0$-motivic $t$-structure, we have the vanishing
 \[ {\rm Hom}_\DMeffk(M_{\geq 1}(X),\underline{\rm Hom}_\DMeffk(A^\vee,\bZ(1)[2])[-1])=0.\]
We also have the vanishing
\[
{\rm Hom}_\DMeffk(M_{\geq 1}(X),A[-1])=0
\]
since $A$ is in the heart of the $0$-motivic $t$-structure.
Thus the first horizontal arrow is an isomorphism.
\end{proof}

\begin{thm}
\label{2.29}
Suppose $G$ is a semi-abelian variety and $X\in Sm/k$.
Then there is an isomorphism
\[
{\rm Hom}_\DMeffk(M_{\geq 1}(X),G)\stackrel{\sim}\rightarrow {\rm Hom}_\DMeffk(G^\vee,M_1^*(X))
\]
that is functorial in $G$.
\end{thm}
\begin{proof}
Let $f\colon G\rightarrow G'$ be a homomorphism of semi-abelian varieties.
From \eqref{1.15.1} and Proposition \ref{2.26} we can make a commutative diagram
\[\begin{tikzpicture}[baseline= (a).base]
\node[scale=.73] (a) at (0,0)
{\begin{tikzcd}[column sep=tiny, row sep=small]
    {\rm Hom}_\DMeffk(M_{\geq 1}(X),G)\arrow[r]\arrow[d,"u"',leftarrow]&{\rm Hom}_\DMeffk(M_{\geq 1}(X),G')\arrow[d,"u'",leftarrow]\\
    {\rm Hom}_\DMeffk(M_{\geq 1}(X),h_0(\underline{\rm Hom}_\DMeffk(G^\vee,\bZ(1)[2])))\arrow[d,"v"']\arrow[r]&{\rm Hom}_\DMeffk(M_{\geq 1}(X),h_0(\underline{\rm Hom}_\DMeffk(G'^\vee,\bZ(1)[2])))\arrow[d,"v'"]\\
    {\rm Hom}_\DMeffk(M_{\geq 1}(X),\underline{\rm Hom}_\DMeffk(G^\vee,\bZ(1)[2]))\arrow[d,"\sim"']\arrow[r]&{\rm Hom}_\DMeffk(M_{\geq 1}(X),\underline{\rm Hom}_\DMeffk(G'^\vee,\bZ(1)[2]))\arrow[d,"\sim"]\\
    {\rm Hom}_\DMeffk(G^\vee,\underline{\rm Hom}_\DMeffk(M_{\geq 1}(X),\bZ(1)[2]))\arrow[d,leftarrow,"w"']\arrow[r]&{\rm Hom}_\DMeffk(G'^\vee,\underline{\rm Hom}_\DMeffk(M_{\geq 1}(X),\bZ(1)[2]))\arrow[d,leftarrow,"w'"]\\
    {\rm Hom}_\DMeffk(G^\vee,M_1^*(X))\arrow[r]& {\rm Hom}_\DMeffk(G^\vee,M_1^*(X)).
    \end{tikzcd}};
  \end{tikzpicture}\]
The morphisms $u$ and $u'$ are isomorphisms by Proposition \ref{2.25}.
Since $M_{\geq 1}(X)$ is $t$-positive for the $0$-motivic $t$-structure, $v$ and $v'$ are isomorphisms by Lemma \ref{2.28}.
  
It remains to show that $w$ is an isomorphism.
By \eqref{1.15.1}, it suffices to show the vanishing
\begin{equation}
\label{2.29.1}
{\rm Hom}_\DMeffk(G^\vee,{\rm NS}(X)[i])=0
\end{equation}
  for $i=-1,0$.
  Since there is a distinguished triangle
  \[L^\vee\rightarrow A^\vee\rightarrow G^\vee\rightarrow L^\vee[1]\]
  in $\DMeffk$, it suffices to show the vanishings
  \[{\rm Hom}_\DMeffk(L^\vee[1],{\rm NS}(X)[i])=0 \text{ and }{\rm Hom}_\DMeffk(A^\vee,{\rm NS}(X)[i])=0\]
  for $i=-1,0$. The first one holds since $L^\vee$ and ${\rm NS}(X)$ are in the heart of the $0$-motivic $t$-structure. Since there is a distinguished triangle
  \[\bZ^r\rightarrow \bZ^s\rightarrow {\rm NS}(X)\rightarrow \bZ^r[1]\]
  in $\DMeffk$ for some integers $r,s\geq 0$, it suffices to show the vanishing
  \[{\rm Hom}_\DMeffk(A^\vee,\bZ[i])=0\]
  for every integer $i\leq 1$. This follows from Proposition \ref{1.33}.
\end{proof}

\begin{rmk}
\label{2.32}
By forgetting $w$ and $w'$ in the proof of Theorem \ref{2.29}, we also have an isomorphism
\begin{equation}
\label{2.32.1}
{\rm Hom}_\DMeffk(M_{\geq 1}(X),G)\stackrel{\sim}\rightarrow {\rm Hom}_\DMeffk(G^\vee,\underline{\rm Hom}_\DMeffk(M_{\geq 1}(X),\bZ(1)[2]))
\end{equation}
that is functorial in $G$.
Moreover, this is trivially functorial in $X$.
\end{rmk}

\begin{thm}
\label{2.30}
Suppose $X\in Sm/k$.
Then $M_1^*(X)$ is isomorphic to the Cartier dual of $\Alb^0(X)$.
\end{thm}
\begin{proof}
By Theorem \ref{2.29}, a morphism
\[
M_{\geq 1}(X)\rightarrow G
\]
in $\DMeffk$ is universal among all morphisms from $M_{\geq 1}(X)$ to semi-abelian varieties if and only if the corresponding morphism
\[
G^\vee\rightarrow M_1^*(X)
\]
is universal among all morphisms from the Cartier duals of semi-abelian varieties to $M_1^*(X)$.
The identity morphism
\[
M_1^*(X)\rightarrow M_1^*(X)
\]
is the solution to the universal problem since $M_1^*(X)$ is the Cartier dual of a semi-abelian variety by Theorem \ref{1.17}.
This means that $M_1^*(X)$ is isomorphic to the Cartier dual of $\Alb^0(X)$.
\end{proof}

\begin{none}
\label{2.33}
Suppose $f\colon X\to Y$ is a morphism in $Sm/k$.
As a consequence of Theorem \ref{2.30}, we see that the morphism $\Alb^0(f)\colon \Alb^0(X)\to \Alb^0(Y)$ naturally induces a morphism $f^*\colon M_1^*(Y)\to M_1^*(X)$.
\end{none}

\begin{prop}
\label{2.34}
Suppose $X\in Sm/k$.
Then the morphism
\[
M_1^*(X)\to \underline{\rm Hom}(M_{\geq 1}(X),\bZ(1)[2])
\]
in \eqref{1.15.1} corresponds to the Albanese morphism $M_{\geq 1}(X)\to \Alb^0(X)$ via \eqref{2.32.1}.
\end{prop}
\begin{proof}
In the proof of Theorem \ref{2.29}, it is shown that the homomorphism
\[
w\colon \underline{\rm Hom}_\DMeffk(G^\vee,M_1^*(X))
\to
\underline{\rm Hom}_\DMeffk(G^\vee,\underline{\rm Hom}_{\DMeffk}(M_{\geq 1}(X),\bZ(1)[2]))
\]
is an isomorphism for every semi-abelian variety $G$.
This means that the morphism $M_1^*(X)\to \underline{\rm Hom}(M_{\geq 1}(X),\bZ(1)[2])$ is universal among all morphisms from the Cartier duals of semi-abelian varieties to $\underline{\rm Hom}(M_{\geq 1}(X),\bZ(1)[2])$.
From \eqref{2.32.1} and the universality of the Albanese morphism $M_{\geq 1}(X)\to \Alb^0(X)$ we conclude.
\end{proof}

\begin{prop}
\label{2.36}
Suppose $f\colon X\to Y$ is a morphism in $Sm/k$.
Then there is a commutative diagram
\begin{equation}
\label{2.36.1}
\begin{tikzcd}
M_1^*(Y)\ar[d,"f^*"']\ar[r]&
\underline{\rm Hom}_\DMeffk (M_{\geq 1}(Y),\bZ(1)[2])\ar[d,"{\uHom_{\DMeffk}(M_{\geq 1}(f),\bZ(1)[2])}"]
\\
M_1^*(X)\ar[r]&
\underline{\rm Hom}_\DMeffk (M_{\geq 1}(X),\bZ(1)[2]),
\end{tikzcd}
\end{equation}
where the horizontal arrows are given by \eqref{1.15.1}.
\end{prop}
\begin{proof}
There is a commutative diagram
\begin{equation}
\label{2.36.2}
\begin{tikzcd}
M_{\geq 1}(X)\ar[d,"M_{\geq 1}(f)"']\ar[r]&
\Alb^0(X)\ar[d,"\Alb^0(f)"]
\\
M_{\geq 1}(Y)\ar[r]&
\Alb^0(Y),
\end{tikzcd}
\end{equation}
where the horizontal arrows are the Albanese morphisms.
By Proposition \ref{2.34}, \eqref{2.36.1} corresponds to \eqref{2.36.2} via \eqref{2.32.1}.
This shows that \eqref{2.36.1} commutes.
\end{proof}

\begin{prop}
\label{2.40}
Suppose $X\in Sm/k$.
The distinguished triangle \eqref{1.15.1}
\[
M_1^*(X)
\to
\uHom_{\DMeffk}(M_{\geq 1}(X),\bZ(1)[2])
\to
\NS(X)
\to
M_1^*(X)[1]
\]
is functorial in $X$.
\end{prop}
\begin{proof}
Suppose $f\colon X\to Y$ is a morphism in $Sm/k$.
We begin with a commutative diagram
\[
\begin{tikzcd}
\uHom_{\DMeffk}(M_{\geq 1}(Y),\bZ(1)[2])\ar[d,"v"']\ar[r]&
\NS(Y)\ar[d,"f^*"]
\\
\uHom_{\DMeffk}(M_{\geq 1}(X),\bZ(1)[2])\ar[r]&
\NS(X),
\end{tikzcd}
\]
where $f^*\colon \NS(Y)\to \NS(X)$ is the naturally induced homomorphism, and $v:=\uHom_{\DMeffk}(M_{\geq 1}(f),\bZ(1)[2])$.
This can be extended to a morphism of distinguished triangles
\[
\begin{tikzcd}
M_1(Y)^*\ar[d,"u"']\ar[r,"a"]&
\uHom_{\DMeffk}(M_{\geq 1}(Y),\bZ(1)[2])\ar[d,"v"]\ar[r]&
\NS(Y)\ar[d,"f^*"]\ar[r]&
M_1(Y)^*[1]\ar[d,"{u[1]}"]
\\
M_1(X)^*\ar[r,"b"]&
\uHom_{\DMeffk}(M_{\geq 1}(X),\bZ(1)[2])\ar[r]&
\NS(X)\ar[r]&
M_1(X)[1]
\end{tikzcd}
\]
for some morphism $u$, where $a$ and $b$ are given by \eqref{1.15.1}.
Consider the morphism $f^*\colon M_1^*(Y)\to M_1^*(X)$.
Proposition \ref{2.36} gives $bf^*=va$.
Since $bu=va$ we deduce
\begin{equation}
\label{2.40.1}
bf^*=bu.
\end{equation}
Due to \eqref{2.29.1} and Theorem \ref{2.30} we have an isomorphism
\[
{\rm Hom}(M_1(Y)^*,M_1(X)^*)
\simeq
{\rm Hom}(M_1(Y)^*,\uHom_\DMeffk(M_{\geq 1}(X),\bZ(1)[2])).
\]
Together with \eqref{2.40.1}, we deduce $f^*=u$.
\end{proof}

\begin{df}
Suppose $X\in Sm/k$.
The \emph{derived Picard of $X$} is defined to be
\[
\RPic(X):=\uHom_{\DMeffk}(M(X),\bZ(1)[2]).
\]
This is a primitive version of the derived Picard of $X$ defined by Barbieri-Viale and Kahn in \cite[Definition 5.3.1]{MR3545132}.
\end{df}

\begin{df}
As noted in Introduction, for every $X\in Sm/k$ the \emph{derived Albanese of $X$} is defined to be
\[
\LAlb(X)
:=
\tau_{\geq 0} \uHom_\DMeffk(\uHom_\DMeffk(M(X),\bZ(1)[2]),\bZ(1)[2]).
\]
Recall that we defined $\NS^*(X):=\uHom_{\DMeffk}(\NS(X),\bZ(1)[1])$.
If $U\to X$ is an open immersion in $Sm/k$, we define
\[
\LAlb(X/U)
:=
\uHom_\DMeffk(\uHom_\DMeffk(M(X/U),\bZ(1)[2]),\bZ(1)[2]).
\]
\end{df}

\begin{thm}
\label{2.41}
Suppose $X\in Sm/k$.
Then in $\DMeffk$, there exists a functorial distinguished triangle
\begin{equation}
\NS^*(X)[1]
\to
\LAlb(X)
\to
\Alb(X)
\to
\NS^*(X)[2].
\end{equation}
\end{thm}
\begin{proof}
Apply $\uHom_{\DMeffk}(-,\bZ(1)[2])$ to \eqref{1.15.1} to have a distinguished triangle
\[
\begin{split}
\NS^*(X)[1]
&\to
\underline{\rm Hom}_\DMeffk(\underline{\rm Hom}_\DMeffk(M_{\geq 1}(X),\bZ(1)[2]),\bZ(1)[2])
\\
&\to
\underline{\rm Hom}_\DMeffk(M_1^*(X),\bZ(1)[2])
\to
\NS^*(X)[2].
\end{split}
\]
This induces a distinguished triangle
\begin{equation}
\label{2.41.1}
\begin{split}
\NS^*(X)[1]
&\to
\underline{\rm Hom}_\DMeffk(\underline{\rm Hom}_\DMeffk(M(X),\bZ(1)[2]),\bZ(1)[2])
\\
&\to
\underline{\rm Hom}_\DMeffk(M_1^*(X),\bZ(1)[2])\oplus M_0(X)
\to
\NS^*(X)[2].
\end{split}
\end{equation}

There is an exact sequence $0\to \bZ^r\to \bZ^s\to \NS(X)\to 0$ for some integers $r,s\geq 0$, and this induces a distinguished triangle
\[
\NS^*(X)[1]\to \bZ^s(1)[2]\to \bZ^r(1)[2]\to \NS^*(X)[2].
\]
This implies
\begin{equation}
\label{2.42.3}
h_{i}(\NS^*(X)[1])=0
\end{equation}
for $i<0$.
Hence we can use the $0$-motivic $t$-structure to \eqref{2.41.1} to have a distinguished triangle
\begin{align*}
\NS^*(X)[1] &\to \LAlb(X)
\\
&\to \tau_{\geq 0}\uHom_\DMeffk(M_1^*(X),\bZ(1)[2])\oplus M_0(X)\to \NS^*(X)[2],
\end{align*}
which can be written as
\begin{equation}
\label{2.42.2}
\NS^*(X)[1]
\to
\LAlb(X)
\to
\Alb(X)
\to
\NS^*(X)[2]
\end{equation}
by Lemma \ref{2.28}, Proposition \ref{2.25}, and Theorem \ref{2.30}.
The functoriality follows from Propositions \ref{2.26} and \ref{2.40}.
\end{proof}

\begin{prop}
\label{2.50}
Suppose $j\colon U\to X$ be an open immersion in $Sm/k$.
Then in $\DMeffk$, there exists a canonical distinguished triangle
\begin{equation}
\label{2.50.1}
\LAlb(U)\to \LAlb(X)\to \LAlb(X/U)\to \LAlb(U)[1].
\end{equation}
\end{prop}
\begin{proof}
We only need to consider the case when $X$ is integral.
In $\DMeffk$, there is a distinguished triangle
\begin{align*}
\LAlb(X/U)
&\to 
\uHom_\DMeffk(\uHom_\DMeffk(M(U),\bZ(1)[2]),\bZ(1)[2])
\\
\to &
\uHom_\DMeffk(\uHom_\DMeffk(M(X),\bZ(1)[2]),\bZ(1)[2])
\to
\LAlb(X/U)[1].
\end{align*}
We can use the $0$-motivic $t$-structure to this distinguished triangle to obtain \eqref{2.50.1} if we have the vanishing
\[
h_0(\LAlb(X/U))=0.
\]
This follows from Proposition \ref{2.49} since $h_0(\bZ(1)[2])=0$.
\end{proof}

\begin{thm}
\label{2.44}
Suppose
\[
\begin{tikzcd}
Y'\ar[d,"f'"']\ar[r,"g'"]&Y\ar[d,"f"]
\\
X'\ar[r,"g"]&X
\end{tikzcd}
\]
is a Nisnevich distinguished triangle in $Sm/k$, i.e., $f$ is \'etale, $g$ is an open immersion, and the induced morphism $f^{-1}(X-g(X'))\to X-g(X')$ with the reduced scheme structure on $X-g(X')$ is an isomorphism.
Then in $\DMeffk$, there is a homotopy cartesian square
\[
\begin{tikzcd}
\LAlb(Y')\ar[d]\ar[r]&
\LAlb(Y)\ar[d]
\\
\LAlb(X')\ar[r]&
\LAlb(X).
\end{tikzcd}
\]
\end{thm}
\begin{proof}
The homotopy cartesian square
\[
\begin{tikzcd}
M(Y')\ar[d]\ar[r]&M(Y)\ar[d]
\\
M(X')\ar[r]&M(X)
\end{tikzcd}
\]
induces an isomorphism $M(Y/Y')\to M(X/X')$.
This induces an isomorphism $\LAlb(Y/Y')\to \LAlb(X/X')$.
Together with Proposition \ref{2.50}, we finish the proof.
\end{proof}

\bibliography{../bib}

\begin{thebibliography}{10}

\bibitem{MR2107442}
{\sc F.~Andreatta and L.~Barbieri-Viale}, {\em Crystalline realizations of
  1-motives}, Math. Ann., 331 (2005), pp.~111--172.

\bibitem{SGA4}
{\sc M.~Artin, A.~Grothendieck, and J.~L. Verdier}, {\em Th\'eorie des topos et
  cohomologie \'etale des sch\'emas}, vol.~269, 270, 305 of Lecture Notes in
  Mathematics, Springer-Verlag, 1972--1973.
\newblock S\'eminaire de G\'eom\'etrie Alg\'ebrique du Bois--Marie 1963---64.

\bibitem{Ayo07}
{\sc J.~Ayoub}, {\em Les six op\'{e}rations de {G}rothendieck et le formalisme
  des cycles \'{e}vanescents dans le monde motivique.}, Ast\'{e}risque,
  (2007).

\bibitem{MR2735752}
\leavevmode\vrule height 2pt depth -1.6pt width 23pt, {\em The {$n$}-motivic
  {$t$}-structures for {$n=0$}, {$1$} and {$2$}}, Adv. Math., 226 (2011),
  pp.~111--138.

\bibitem{MR2494373}
{\sc J.~Ayoub and L.~Barbieri-Viale}, {\em 1-motivic sheaves and the {A}lbanese
  functor}, J. Pure Appl. Algebra, 213 (2009), pp.~809--839.

\bibitem{MR3545132}
{\sc L.~Barbieri-Viale and B.~Kahn}, {\em On the derived category of
  1-motives}, Ast\'{e}risque,  (2016), pp.~xi+254.

\bibitem{MR1891270}
{\sc L.~Barbieri-Viale and V.~Srinivas}, {\em Albanese and {P}icard 1-motives},
  M\'{e}m. Soc. Math. Fr. (N.S.),  (2001), pp.~vi+104.

\bibitem{zbMATH03815835}
{\sc P.~{Berthelot}, L.~{Breen}, and W.~{Messing}}, {\em {Th\'eorie de
  Dieudonne cristalline. II}}, vol.~930, Springer, Cham, 1982.

\bibitem{zbMATH00653321}
{\sc S.~{Bloch}}, {\em {The moving lemma for higher Chow groups}}, {J. Algebr.
  Geom.}, 3 (1994), pp.~537--568.

\bibitem{MR3477640}
{\sc D.-C. Cisinski and F.~D\'{e}glise}, {\em \'{E}tale motives}, Compos.
  Math., 152 (2016), pp.~556--666.

\bibitem{CD19}
{\sc D.-C. {Cisinski} and F.~{D\'eglise}}, {\em {Triangulated categories of
  mixed motives}}, Cham: Springer, 2019.

\bibitem{Con}
{\sc B.~Conrad}, {\em Cohomological descent}.
\newblock https://math.stanford.edu/$\sim$conrad/.

\bibitem{MR1423020}
{\sc A.~J. de~Jong}, {\em Smoothness, semi-stability and alterations}, Inst.
  Hautes \'{E}tudes Sci. Publ. Math.,  (1996), pp.~51--93.

\bibitem{zbMATH03375638}
{\sc P.~{Deligne}}, {\em {Th\'eorie de Hodge. III}}, {Publ. Math., Inst. Hautes
  \'Etud. Sci.}, 44 (1974), pp.~5--77.

\bibitem{EGA}
{\sc J.~Dieudonn{\'e} and A.~Grothendieck}, {\em \'{E}l\'ements de
  g\'eom\'etrie alg\'ebrique}, vol.~4, 8, 11, 17, 20, 24, 28, 32, 1961--1967.

\bibitem{Man}
{\sc P.~Mannisto}, {\em Albanese and picard 1-motives in positive
  characteristic},  (2013).
\newblock arXiv:1308.0472v1.

\bibitem{MVW}
{\sc C.~Mazza, V.~Voevodsky, and C.~Weibel}, {\em Lecture notes on motivic
  cohomology}, vol.~2 of Clay Mathematics Monographs, American Mathematical
  Society, Providence, RI; Clay Mathematics Institute, Cambridge, MA, 2006.

\bibitem{zbMATH01573275}
{\sc A.~{Neeman}}, {\em {Triangulated categories}}, vol.~148, Princeton, NJ:
  Princeton University Press, 2001.

\bibitem{MR0213365}
{\sc F.~Oort}, {\em Commutative group schemes}, vol.~15 of Lecture Notes in
  Mathematics, Springer-Verlag, Berlin-New York, 1966.

\bibitem{MR2102056}
{\sc F.~Orgogozo}, {\em Isomotifs de dimension inf\'{e}rieure ou \'{e}gale \`a
  un}, Manuscripta Math., 115 (2004), pp.~339--360.

\bibitem{RamachandranDuality}
{\sc N.~Ramachandran}, {\em Duality of {A}lbanese and {P}icard 1-motives},
  {K}-theory, 22 (2001), pp.~271--301.

\bibitem{SCC_1958-1959__4__A11_0}
{\sc J.-P. Serre}, {\em Morphismes universels et diff\'erentielles de
  troisi\`eme esp\`ece}, S\'eminaire Claude Chevalley, 4 (1958-1959).
\newblock talk:11.

\bibitem{zbMATH00912445}
{\sc V.~Voevodsky}, {\em {Homology of schemes}}, {Sel. Math., New Ser.}, 2
  (1996), pp.~111--153.

\bibitem{MR1764202}
\leavevmode\vrule height 2pt depth -1.6pt width 23pt, {\em Triangulated
  categories of motives over a field}, in Cycles, transfers, and motivic
  homology theories, vol.~143 of Ann. of Math. Stud., Princeton Univ. Press,
  Princeton, NJ, 2000, pp.~188--238.

\bibitem{MR2804268}
\leavevmode\vrule height 2pt depth -1.6pt width 23pt, {\em Cancellation
  theorem}, Doc. Math.,  (2010), pp.~671--685.

\bibitem{Voe10a}
\leavevmode\vrule height 2pt depth -1.6pt width 23pt, {\em Homotopy theory of
  simplicial sheaves in completely decomposable topologies}, J. Pure Appl.
  Algebra, 214 (2010), pp.~1384--1398.

\end{thebibliography}
    \bibliographystyle{siam}
\end{document}